\documentclass{amsart}
\usepackage{amsrefs}

\newtheorem{Theorem}{Theorem}[section]
\newtheorem{Lemma}[Theorem]{Lemma}

\numberwithin{equation}{section}

\hyphenpenalty=99999

\newcommand{\R}{\mathbb{R}}
\newcommand{\cR}{\mathcal{R}}
\renewcommand{\S}{\mathbb{S}}
\newcommand{\cS}{\mathcal{S}}
\newcommand{\del}{\partial}
\newcommand{\grad}{\nabla}
\renewcommand{\epsilon}{\varepsilon}
\renewcommand{\[}{\begin{equation}}
\renewcommand{\]}{\end{equation}}
\newcommand{\Lip}{\text{Lip}}
\newcommand{\Area}{\operatorname{Area}}
\newcommand{\cat}{\text{cat}}

\begin{document}

\title{Maximal spectral surfaces of revolution converge to a catenoid}
\author{Sinan Ariturk}
\address{Pontif\'icia Universidade Cat\'olica do Rio de Janeiro \\ Rio de Janeiro, Brazil}
\email{ariturk@mat.puc-rio.br}

\begin{abstract}
We consider a maximization problem for eigenvalues of the Laplace-Beltrami operator on surfaces of revolution in $\R^3$ with two prescribed boundary components.
For every $j$, we show there is a surface $\Sigma_j$ which maximizes the $j$-th Dirichlet eigenvalue.
The maximizing surface has a meridian which is a rectifiable curve.
If there is a catenoid which is the unique area minimizing surface with the prescribed boundary, then the eigenvalue maximizing surfaces of revolution converge to this catenoid.
\end{abstract}

\maketitle

\section{Introduction}

On a smoothly bounded planar domain $\Omega$, the Dirichlet eigenvalues form a sequence
\[
	0 < \lambda_1(\Omega) \le \lambda_2(\Omega) \le \lambda_3(\Omega) \le \ldots
\]
The relationship between the eigenvalues and the domain $\Omega$ is complicated.
An interesting problem is to find domains which minimize eigenvalues subject to a geometric constraint.
For example, Bucur, Buttazzo, and Henrot~\cite{BBH} showed that there is a domain which minimizes the second eigenvalue among bounded planar domains with the same perimeter.
This was extended to higher eigenvalues by van den Berg and Iversen~\cite{BI}.
Bucur and Freitas~\cite{BF} showed that these domains converge to a disc.
These domains have been analyzed numerically by Antunes and Freitas~\cite{AF1} and Bogosel and Oudet~\cite{BO}.
A minimization problem with more general constraints was considered by van den Berg~\cite{Be}.

In this article, we consider a related problem for surfaces in $\R^3$ motivated by minimal surfaces and the Plateau problem.
Recall that a minimal surface is a surface that locally minimizes area or, equivalently, a surface with zero mean curvature.
The Plateau problem asks for an area minimizing surface with prescribed boundary.
For example, if the prescribed boundary is a planar circle in $\R^3$, then a planar disc is the unique area minimizing surface.

Fix two distinct parallel circles $C_1$ and $C_2$ in $\R^3$ centered about a common axis $\Gamma$.
Assume the circles are non-degenerate, i.e. have positive radius, and let $R_1$ and $R_2$ be the radii of $C_1$ and $C_2$, respectively.
There is an area minimizing surface with boundary given by $C_1$ and $C_2$.
If the circles are coplanar, then a planar annulus is the unique area minimizing surface.
If the circles are not coplanar, then there are three cases, depending on the choice of the circles.
In the first case, the union of two planar discs is the unique area minimizer.
In the second case, there is a catenoid which is the unique area minimizer. 
In the third case, there are two area minimizers.
One is the union of two planar discs and the other is a catenoid.

We pose an eigenvalue optimization problem in a similar spirit.
Let $\cS$ be the set of connected surfaces of revolution $\Sigma$ satisfying the following three properties.
First $\Sigma$ has two boundary components, given by the circles $C_1$ and $C_2$.
Second $\Sigma$ is disjoint from the axis $\Gamma$.
Third a meridian of $\Sigma$ is an oriented rectifiable curve from $C_1$ to $C_2$.
Recall a meridian of $\Sigma$ is the intersection of $\Sigma$ and a half-plane with boundary given by the axis $\Gamma$.
A rectifiable curve is a curve which admits a Lipschitz continuous parametrization.
For a smooth surface of revolution $\Sigma$ in $\cS$, let $\Delta_\Sigma$ be the Laplace-Beltrami operator on $\Sigma$.
The Dirichlet eigenvalues of $-\Delta_\Sigma$ form an increasing sequence,
\[
\label{evseq}
	0 < \lambda_1(\Sigma) < \lambda_2(\Sigma) \le \lambda_3(\Sigma) \le \ldots
\]
In the next section, we extend the domain of these eigenvalues to all surfaces in $\cS$, see \eqref{lamjdef}.
Then we study the maximization of these eigenvalues.
For every positive integer $j$, define
\[
	\Lambda_j = \sup \bigg\{ \lambda_j(\Sigma) : \Sigma \in \cS \bigg\}
\]

If $C_1$ and $C_2$ are coplanar, then the planar annulus $A$ in $\cS$ maximizes every eigenvalue, i.e. $\lambda_j(A)=\Lambda_j$ for all $j$.
This follows from~\cite[Theorem 1.1]{annulus} and an approximation argument, see Lemma \ref{annuluscomp} below.
If the circles $C_1$ and $C_2$ are not coplanar, then the following theorem establishes the existence of eigenvalue maximizing surfaces.
Let $D_1$ and $D_2$ be the planar discs embedded in $\R^3$ with boundaries given by $C_1$ and $C_2$, respectively.
For each positive integer $j$, let $\lambda_j(D_1 \cup ~D_2)$ be the $j$-th Dirichlet eigenvalue of $D_1 \cup D_2$.

\begin{Theorem}
\label{exist}
Fix a positive integer $j$, and assume that $\Lambda_j > \lambda_j(D_1 \cup D_2)$.
Then there is a surface $\Sigma_j$ in $\cS$ such that $\lambda_j (\Sigma_j) = \Lambda_j$.
\end{Theorem}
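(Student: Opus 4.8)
The plan is to use the direct method of the calculus of variations, taking a maximizing sequence of meridian curves and extracting a subsequential limit; the hypothesis $\Lambda_j > \lambda_j(D_1\cup D_2)$ is exactly what prevents the limiting surface from degenerating onto the axis. Concretely, I would fix $j$ as in the statement, choose $\Sigma_n\in\cS$ with $\lambda_j(\Sigma_n)\to\Lambda_j$, and represent each $\Sigma_n$ by its meridian, written in half-plane coordinates $(r,z)$ (with $r>0$) as a Lipschitz curve $\gamma_n=(r_n,z_n)\colon[0,1]\to(0,\infty)\times\R$ parametrized proportionally to arclength, so $|\gamma_n'|\equiv L_n=\operatorname{length}(\gamma_n)$, with $\gamma_n(0)$ and $\gamma_n(1)$ the fixed points determined by $C_1$ and $C_2$. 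The goal is then to show that a subsequence of the $\gamma_n$ converges to the meridian of some $\Sigma\in\cS$ and that $\lambda_j$ passes to the limit.

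The first key point is that the meridians eventually stay a definite distance from the axis: there is $\epsilon_0>0$ with $r_n\ge\epsilon_0$ for all large $n$. To see this I would show that if the meridian of any $\Sigma\in\cS$ comes within $\epsilon$ of the axis, then $\lambda_j(\Sigma)\le\lambda_j(D_1\cup D_2)+\omega(\epsilon)$ with $\omega(\epsilon)\to0$ as $\epsilon\to0$. Indeed, cutting $\Sigma$ along the circle swept by its point nearest the axis splits it into a piece $\Sigma^a$ bounded by $C_1$ and a piece $\Sigma^b$ bounded by $C_2$, each meeting the $\epsilon$-neighborhood of the axis only in a thin tube of radius $\le\epsilon$ and area $O(\epsilon)$; capping off the thin ends turns $\Sigma^a,\Sigma^b$ into disc-type surfaces of revolution with boundaries $C_1,C_2$, whose $k$-th eigenvalues are at most $\lambda_k(D_1),\lambda_k(D_2)$ by the comparison results of Section~2 (the disc-type analogue of Lemma~\ref{annuluscomp}: the flat disc maximizes every eigenvalue among disc-type surfaces of revolution with a prescribed boundary circle). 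Transplanting the relevant eigenfunctions to $\Sigma$ and multiplying by cutoffs supported off the thin neck yields, for each decomposition $p+q=j+1$, an almost-orthogonal family spanning a $(j+1)$-dimensional space with Rayleigh quotients at most $\max\{\lambda_p(D_1),\lambda_q(D_2)\}+\omega(\epsilon)$; minimizing over $p+q=j+1$ and using the interlacing description of the spectrum of a disjoint union gives the claimed bound. Applied to $\Sigma_n$ this contradicts $\lambda_j(\Sigma_n)\to\Lambda_j>\lambda_j(D_1\cup D_2)$ once $n$ is large.

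Next I would bound the lengths $L_n$. Since $\epsilon_0\le r_n\le R_1+L_n$, test functions $u$ depending only on the arclength parameter $s\in[0,L_n]$ of the meridian satisfy $\int_{\Sigma_n}|\grad u|^2=2\pi\int_0^{L_n}u'(s)^2r_n(s)\,ds$ and $\int_{\Sigma_n}u^2=2\pi\int_0^{L_n}u(s)^2r_n(s)\,ds$, and running $u$ over the span of the first $j$ Dirichlet eigenfunctions of $-d^2/ds^2$ on $[0,L_n]$ gives $\lambda_j(\Sigma_n)\le (R_1+L_n)\,\epsilon_0^{-1}\,j^2\pi^2 L_n^{-2}$. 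If $L_n\to\infty$ this tends to $0$, again contradicting $\lambda_j(\Sigma_n)\to\Lambda_j>0$, so $L_n\le L$ for some $L$. The curves $\gamma_n$ are therefore uniformly Lipschitz, uniformly bounded, and uniformly bounded away from the axis, so by Arzel\`a--Ascoli a subsequence converges in $C^0$ to a Lipschitz curve $\gamma=(r,z)$ with $r\ge\epsilon_0$, with the prescribed endpoints and with $\Lip(\gamma)\le L$; thus $\gamma$ is the meridian of a surface $\Sigma\in\cS$. It then remains to check that $\lambda_j(\Sigma)\ge\limsup_n\lambda_j(\Sigma_n)=\Lambda_j$, which together with $\lambda_j(\Sigma)\le\Lambda_j$ gives $\lambda_j(\Sigma)=\Lambda_j$ and finishes the proof with $\Sigma_j=\Sigma$.

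I expect this last semicontinuity to be the main obstacle, and it is exactly the place where the extension of $\lambda_j$ to all of $\cS$ from Section~2, equation~\eqref{lamjdef}, is needed. The difficulty is that under $C^0$-convergence $\gamma_n\to\gamma$ the derivatives $\gamma_n'$ converge only weakly-$*$ in $L^\infty$ and the length is merely lower semicontinuous, so the metric of $\Sigma$ is not the pointwise limit of the metrics of the $\Sigma_n$ and a naive comparison of Rayleigh quotients does not give the inequality in the direction we want; one must use the way $\lambda_j$ is defined on non-smooth surfaces (as a relaxation, resp.\ monotone limit, of the smooth eigenvalues) to obtain $\lambda_j(\Sigma)\ge\limsup_n\lambda_j(\Sigma_n)$. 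The remaining care is in the transplantation estimates of the second step — the cutoff error on the thin neck, the almost-orthogonality of the two families, and pinning down the precise disc-comparison input — but these are routine once the setup is in place.
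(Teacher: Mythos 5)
Your overall strategy matches the paper's: take a maximizing sequence of constant-speed meridians, show it is confined between two cylinders and has bounded length, extract an Arzel\`a--Ascoli limit, and pass $\lambda_j$ to the limit. But there is a genuine gap at exactly the point you flag as ``the main obstacle,'' and your sketch for the axis-avoidance step also invokes an ingredient the paper does not provide.

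On axis-avoidance: the paper does not cut, cap off, and transplant with cutoffs. It instead proves an annulus comparison (Lemma~\ref{annuluscomp}, extended to Lipschitz curves by mollification and the Kong--Zettl continuity theorem), deduces Lemma~\ref{dblannuluscomp} (if $\lambda_j(\alpha)>\lambda_j(A_1\cup A_2)$ then $F\ge a$, and symmetrically for $B_1\cup B_2$), and then uses Rauch--Taylor to shrink the inner radius $a$ so that $\lambda_j(A_1\cup A_2)<\Lambda_j$, exploiting the hypothesis $\Lambda_j>\lambda_j(D_1\cup D_2)$. Your route appeals to a ``disc-type analogue of Lemma~\ref{annuluscomp}'' (flat disc maximizes all eigenvalues among disc-type surfaces of revolution with a fixed boundary circle), which is not in the paper, and the transplantation/cutoff/almost-orthogonality estimates you wave at are nontrivial and would need to be carried out. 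Your argument is a plausible alternative but is neither complete nor the cleaner path.

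The real gap is the semicontinuity step. You correctly observe that under $C^0$-convergence $\gamma_n\to\gamma$ the derivatives only converge weak-$*$ and length can drop, and you say one must ``use the way $\lambda_j$ is defined on non-smooth surfaces (as a relaxation, resp.\ monotone limit).'' But that is not how $\lambda_j$ is defined here: the paper defines $\lambda_{k,n}(\alpha)$ directly via the min-max formula~\eqref{lamkndef}, and that formula depends on the meridian $\alpha=(F,G)$ only through the scalar functions $F$ and $|\alpha'|$. Along the maximizing sequence one has $F_i\to F$ uniformly and $|\alpha_i'|\equiv L_i\to L$ in $L^\infty$, so the regular Sturm--Liouville data converges, and by~\eqref{kzcont} the eigenvalues $\lambda_{k,n}(\alpha_i)$ converge to the eigenvalues of the problem with coefficients $(F,L)$. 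The missing idea is that this limiting problem is realized by an actual curve in $\cR$: since $|F'|\le L$ a.e., one can build a Lipschitz function $H$ with $(F')^2+(H')^2=L^2$ a.e. and $H(0)=G(0)$, $H(1)=G(1)$ (possible because $\int_0^1\sqrt{L^2-(F')^2}\ge\int_0^1|G'|\ge|G(1)-G(0)|$, so an appropriate choice of signs hits the prescribed increment), and set $\beta=(F,H)\in\cR$. Then $\lambda_{k,n}(\alpha_i)\to\lambda_{k,n}(\beta)$ for every $(k,n)$, which gives $\lambda_j(\beta)\ge\limsup_i\lambda_j(\alpha_i)=\Lambda_j$, and the surface corresponding to $\beta$ is the maximizer. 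Your raw Arzel\`a--Ascoli limit $\gamma=(F,G)$ is generally \emph{not} in $\cR$ (its speed need not be constant and may be strictly less than $L$), so you cannot take $\Sigma=\Sigma_\gamma$ as written; replacing $G$ by $H$ is the step that makes the argument close.
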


The case $j=1$ of Theorem \ref{exist} was established in~\cite{first}.
Moreover, in this case the maximizing surface was shown to be smooth.
We prove Theorem \ref{exist} in the third section, using an argument based on the Arzela-Ascoli theorem.
In order to show that a maximizing sequence is contained in a compact set, the key step is to show that a maximizing sequence of surfaces is uniformly bounded between two cylinders about the axis $\Gamma$.
This follows from an annulus comparison inequality, see Lemma \ref{dblannuluscomp}.
This inequality was established in~\cite{annulus} for piecewise smooth surfaces of revolution, and we extend it to surfaces in $\cS$ by an approximation argument.

If there is a catenoid which is the unique area minimizing surface with boundary given by $C_1$ and $C_2$, then the following theorem establishes that the eigenvalue maximizing surfaces given by Theorem \ref{exist} converge to this catenoid.

\begin{Theorem}
\label{catenoid}
Assume there is a catenoid $\Sigma_\cat$ which is the unique area minimizing surface with boundary given by $C_1$ and $C_2$.
Then $\Lambda_j > \lambda_j(D_1 \cup D_2)$ for large $j$.
For such $j$, let $\Sigma_j$ be a surface in $\cS$ such that $\lambda_j (\Sigma_j) = \Lambda_j$.
Then
\[
\label{arealim}
	\lim_{j \to \infty} \Area(\Sigma_j) = \Area(\Sigma_\cat)
\]
Moreover $\Sigma_j$ converges to $\Sigma_\cat$ in the Hausdorff metric.
\end{Theorem}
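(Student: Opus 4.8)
The plan is to treat the three assertions of Theorem~\ref{catenoid}---the eigenvalue inequality, the area limit \eqref{arealim}, and the Hausdorff limit---in turn.

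For the eigenvalue inequality I would first note that the catenoid $\Sigma_\cat$ itself belongs to $\cS$: it is connected, has boundary $C_1\cup C_2$, is disjoint from the axis $\Gamma$, and has a smooth, hence rectifiable, meridian. Therefore $\Lambda_j \ge \lambda_j(\Sigma_\cat)$ for every $j$. Since $\Sigma_\cat$ is the \emph{unique} area minimizing surface spanning $C_1\cup C_2$ and $D_1\cup D_2$ is a different spanning surface, $\Area(\Sigma_\cat) < \Area(D_1)+\Area(D_2)$. Applying Weyl's law to the smooth surface $\Sigma_\cat$ and to the planar union $D_1\cup D_2$,
\[
	\lambda_j(\Sigma_\cat) = \frac{4\pi}{\Area(\Sigma_\cat)}\,j + o(j), \qquad \lambda_j(D_1\cup D_2) = \frac{4\pi}{\Area(D_1)+\Area(D_2)}\,j + o(j),
\]
the strict area inequality forces $\lambda_j(\Sigma_\cat) > \lambda_j(D_1\cup D_2)$ for all large $j$, so $\Lambda_j > \lambda_j(D_1\cup D_2)$ and Theorem~\ref{exist} produces a maximizer $\Sigma_j$ for such $j$.

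For the area limit, the lower bound $\Area(\Sigma_j) \ge \Area(\Sigma_\cat)$ is immediate: $\Sigma_j$ has a rectifiable meridian, hence finite length and bounded radial extent, so it has finite area, and it spans $C_1\cup C_2$, so it cannot beat the area minimizer. For the matching upper bound I would combine $\Lambda_j = \lambda_j(\Sigma_j) \ge \lambda_j(\Sigma_\cat) = \tfrac{4\pi}{\Area(\Sigma_\cat)} j\,(1+o(1))$ with a Weyl-type upper bound $\lambda_j(\Sigma) \le \tfrac{4\pi}{\Area(\Sigma)} j\,(1+o(1))$, valid uniformly for surfaces in $\cS$ trapped between two fixed cylinders. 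The latter I would prove by Dirichlet--Neumann bracketing along the meridian: in the arclength parametrization the profile function $r$ is $1$-Lipschitz, so one may cut the meridian into short subarcs on which $r$ oscillates by at most $\delta$; over each subarc the induced metric $dt^2 + r^2\,d\theta^2$ is $(1+O(\delta))$-bi-Lipschitz to a product metric on a flat cylinder, whose Dirichlet counting function is computed explicitly by separation of variables; summing the resulting lower bounds and optimizing $\delta=\delta(\lambda)$ yields $N_\Sigma(\lambda) \ge \tfrac{\Area(\Sigma)}{4\pi}\lambda\,(1-o(1))$, which rearranges to the claimed bound on $\lambda_j$. The annulus comparison inequality (Lemma~\ref{dblannuluscomp}), together with the area estimate, is what forces the surfaces $\Sigma_j$ between two cylinders whose radii do not depend on $j$, so that the error term is uniform along the sequence. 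Feeding this in,
\[
	\frac{4\pi}{\Area(\Sigma_\cat)}\,j\,(1+o(1)) \;\le\; \Lambda_j \;\le\; \frac{4\pi}{\Area(\Sigma_j)}\,j\,(1+o(1)),
\]
whence $\limsup_{j\to\infty}\Area(\Sigma_j) \le \Area(\Sigma_\cat)$, and \eqref{arealim} follows.

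For the Hausdorff limit I would use rotational symmetry to realize the meridian of $\Sigma_j$ as a curve $\gamma_j$ in a fixed half-plane bounded by $\Gamma$, running from the point of $C_1$ in that half-plane to the point of $C_2$. By the cylinder trapping, $\gamma_j$ is contained in a fixed compact rectangle on which $r \ge \rho_- > 0$, so its length satisfies $L_j \le \Area(\Sigma_j)/(2\pi\rho_-)$ and is uniformly bounded. Parametrizing each $\gamma_j$ on $[0,1]$ with constant speed makes them equi-Lipschitz, so by Arzela--Ascoli a subsequence converges uniformly to a Lipschitz curve $\gamma_\infty$, again joining $C_1$ to $C_2$ inside the same compact set; by lower semicontinuity of area under this convergence and by \eqref{arealim}, the surface of revolution $\Sigma_\infty$ of $\gamma_\infty$ satisfies $\Area(\Sigma_\infty) \le \Area(\Sigma_\cat)$. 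Since $\Sigma_\infty$ also spans $C_1\cup C_2$, minimality gives $\Area(\Sigma_\infty)=\Area(\Sigma_\cat)$, so $\Sigma_\infty$ is area minimizing and therefore equals $\Sigma_\cat$ by the uniqueness hypothesis. As $\Sigma_\cat$ is the only possible subsequential limit, the full sequence $\Sigma_j$ converges to it; uniform convergence of the meridians is Hausdorff convergence of the $\gamma_j$, and rotating about $\Gamma$ gives Hausdorff convergence of $\Sigma_j$ to $\Sigma_\cat$.

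I expect the main obstacle to be the two uniform statements feeding the area limit: the Weyl-type upper bound with a $j$-independent error, and the $j$-independent cylinder trapping of the maximizers---equivalently, ruling out that an eigenvalue-maximizing $\Sigma_j$ develops a long thin tube approaching $\Gamma$ or a far-out spike. The bracketing argument itself needs nothing beyond the Lipschitz regularity of the meridian, but controlling its remainder uniformly, and squeezing the $j$-free cylinder bounds out of Lemma~\ref{dblannuluscomp} and the area estimate, is where the real work lies.
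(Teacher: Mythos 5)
Your overall strategy is the same as the paper's: Weyl's law for $\Sigma_\cat$ gives $\Lambda_j > \lambda_j(D_1\cup D_2)$; the area limit comes from sandwiching $\lambda_j(\Sigma_j)/j$ between the Weyl asymptotics of $\Sigma_\cat$ from below and a bracketing-type upper bound from above; and the Hausdorff limit follows from Arzela--Ascoli, lower semicontinuity of the area functional under uniform convergence of constant-speed meridians, and uniqueness of the catenoid. This matches the paper's decomposition into Lemmas~\ref{uniannulus}--\ref{strongcat} almost exactly.

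The one place your sketch is genuinely underdetermined is the uniform-in-$j$ cylinder trapping. You write that Lemma~\ref{dblannuluscomp} ``together with the area estimate'' forces a $j$-independent trap, but that phrase is ambiguous and risks circularity if ``the area estimate'' means the very area limit \eqref{arealim} you are in the middle of proving. The paper's Lemma~\ref{uniannulus} supplies the missing link: one chooses $a$ so small and $b$ so large that the two double annuli $A_1\cup A_2$ and $B_1\cup B_2$ each have area strictly exceeding $\Area(\Sigma_\cat)$; Weyl's law applied to the annuli then gives $\lambda_j(A_1\cup A_2) < \lambda_j(\Sigma_\cat) \le \Lambda_j = \lambda_j(\Sigma_j)$ for all large $j$ with $a,b$ held fixed, and likewise for $B_1\cup B_2$; Lemma~\ref{dblannuluscomp} then traps all $\Sigma_j$ at once. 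This step uses nothing circular — only $\Area(\Sigma_\cat)$, a fixed number — and it is the crux, so it should be spelled out.

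Two smaller remarks. First, the paper proves a separate uniform length bound (Lemma~\ref{unilength}) before the bracketing, whereas you derive the length bound from the area limit afterwards. Your version of the bracketing (subarcs of arclength $\delta$, giving an error term of order $L_j\sqrt\lambda/\delta$) in fact does not need the length bound at all, because the main term already scales like $\Area(\Sigma_j)\lambda \gtrsim a\,L_j\lambda$ once cylinder trapping holds, so the ratio of error to main term is $O(1/(\delta\sqrt\lambda))\to 0$ for a fixed or slowly shrinking $\delta$; you may wish to note this explicitly, since it slightly streamlines the argument relative to the paper, where the fixed-$N$ partition of $[0,1]$ does require $L_j$ bounded. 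Second, your bracketing inequality for the cylinder pieces is what the paper packages as Lemma~\ref{rect} (a counting-function lower bound for unions of rectangles), so your ``separation of variables'' step is equivalent in content.
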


We prove Theorem \ref{catenoid} in the fourth section, using an argument based on Weyl's law.
If $\Sigma$ is a surface in $\cS$, then Weyl's law states that
\[
\label{weyl}
	\lim_{j \to \infty} \frac{\lambda_j(\Sigma)}{j} = \frac{4 \pi}{\Area(\Sigma)}
\]
Note that Weyl's law easily yields the inequality $\Lambda_j > \lambda_j(D_1 \cup D_2)$ for large $j$ in Theorem \ref{catenoid}.
The argument we use to prove Theorem \ref{catenoid} yields a stronger result, 
see Lemma \ref{strongcat}.
Namely we can weaken the hypothesis that $\lambda_j(\Sigma_j)=\Lambda_j$ for large $j$ and instead only assume that
\[
\label{sketch1}
	\liminf_{j \to \infty} \frac{\lambda_j(\Sigma_j)}{j} \ge \frac{4\pi}{\Area(\Sigma_\cat)}
\]
Note that this is a weaker assumption by Weyl's law \eqref{weyl}.
To prove \eqref{arealim}, we show that
\[
\label{sketch2}
	\liminf_{j \to \infty} \frac{\lambda_j(\Sigma_j)}{j} \le \liminf_{j \to \infty} \frac{4\pi}{\Area(\Sigma_j)}
\]
Note that \eqref{sketch1} and \eqref{sketch2} imply \eqref{arealim}, because $\Sigma_\cat$ is area minimizing.
The proof of \eqref{sketch2} relies crucially on the assumption that $\Sigma_\cat$ is the unique area minimizing surface.
This assumption enables us to prove that there are two cylinders of positive radius centered about the axis $\Gamma$ such that each surface $\Sigma_j$ is in the region bounded between these cylinders.
Then we can prove that the surfaces $\Sigma_j$ have meridians with uniformly bounded length.
For any sequence of surfaces in $\cS$ satisfying these two properties, we can prove \eqref{sketch2}.
Finally we use \eqref{arealim} to prove that the surfaces $\Sigma_j$ converge to $\Sigma_\cat$ in the Hausdorff metric.
We are not able to draw any conclusions when $D_1 \cup D_2$ is an area minimizer.
Specifically, we are not able to prove that the surfaces $\Sigma_j$ are uniformly bounded between cylinders about the axis $\Gamma$ or that their meridians have uniformly bounded length.
Therefore, our proof of \eqref{sketch2} does not apply, and we are not able to establish a statement analogous to \eqref{arealim} or any convergence of the eigenvalue maximizing surfaces.

As described earlier, these results are closely related to the problem of minimizing Dirichlet eigenvalues among planar domains with fixed perimeter.
Another related problem is to minimize Dirichlet eigenvalues among Euclidean domains of fixed volume.
The Faber-Krahn inequality states that a ball minimizes the first Dirichlet eigenvalue among such sets.
By the Krahn-Szeg\"o inequality, the union of two balls of equal radii minimizes the second Dirichlet eigenvalue.
Bucur and Henrot~\cite{BH} showed that there is a quasi-open set which minimizes the third eigenvalue.
This was extended to higher eigenvalues by Bucur~\cite{Bu}.
A similar result for any increasing functional of finitely many eigenvalues was established by Mazzoleni and Pratelli~\cite{MP}.

Although eigenvalue optimizing domains are often not known explicitly, there are other situations where the limit of eigenvalue optimizing domains is an explicit shape.
Antunes and Freitas~\cite{AF2} showed that the rectangle of given area which minimizes the $j$-th Dirichlet eigenvalue converges to a square.
A similar result for the rectangle which maximizes the $j$-th Neumann eigenvalue was established by van den Berg, Bucur, and Gittins~\cite{BBG}.
Antunes and Freitas~\cite{AF1} showed that the rectangular parallelepiped which minimizes the $j$-th Dirichlet eigenvalue subject to a surface area constraint on the boundary converges to a cube.
The asymptotic behavior of eigenvalue minimizing domains is related to the P\'olya conjecture.
In two dimensions, P\'olya's conjecture states that if $\Omega$ is a planar domain with area one, then
\[
\label{polyaconj}
	\lambda_j(\Omega) \ge 4 \pi j
\]
If $\Omega_j^*$ minimizes the $j$-th Dirichlet eigenvalue among planar domains with volume one, then Colbois and El Soufi~\cite{CE} showed that \eqref{polyaconj} is equivalent to
\[
	\lim_{j \to \infty} \frac{\lambda_j(\Omega_j^*)}{j} = 4 \pi
\]
We remark that P\'olya's conjecture and the equivalence theorem of Colbois and El Soufi extend to higher dimensions.

This article is particularly inspired by work of Abreu and Freitas~\cite{AbF}, who studied optimization of eigenvalues of $\S^1$-invariant metrics on $\S^2$.
They bounded the $\S^1$-invariant eigenvalues of a surface of revolution in $\R^3$ in terms of the eigenvalues of a disc.
Colbois, Dryden, and El Soufi~\cite{CDE} extended this to higher dimensions.
These results were motivated by Hersch~\cite{H}, who showed that on a sphere, the round metric maximizes the first non-zero eigenvalue among metrics of given area.

In the next section we define the eigenvalues $\lambda_j(\Sigma)$ for any surface $\Sigma$ in $\cS$, and we reformulate the problem in terms of curves in a half-plane.
In the third section we prove Theorem \ref{exist}, and in the fourth section we prove Theorem \ref{catenoid}.

\section{Eigenvalues on Rectifiable Surfaces of Revolution}

In this section we define the eigenvalues $\lambda_j(\Sigma)$ for any surface $\Sigma$ in $\cS$ and we discuss some basic facts.
Recall that a surface in $\cS$ has a meridian which is a rectifiable curve, i.e. admits a Lipschitz continuous parametrization.
For a smooth surface $\Sigma$ in $\cS$, the definition of the eigenvalues $\lambda_j(\Sigma)$ is well known.
The main purpose of this section is to extend this definition to surfaces in $\cS$ with low regularity.

Fix an open half-plane in $\R^3$ whose boundary is given by the axis $\Gamma$.
Identify this half-plane with $\R^2_+$, defined by
\[
	\R^2_+ = \bigg\{ (x,y) \in \R^2 : x>0 \bigg\}
\]
Let $p$ and $q$ be the points where $\R^2_+$ intersects the circles $C_1$ and $C_2$, respectively.
Let $\cR$ be the set of Lipschitz functions $\alpha:[0,1] \to \R^2_+$ satisfying the following two properties.
First $\alpha(0)=p$ and $\alpha(1)=q$.
Second there is a constant $L$ such that $|\alpha'(t)|=L$ for almost every $t$.
Note that there is a bijective correspondence between $\cR$ and $\cS$. 

Before defining the eigenvalues for surfaces in $\cS$ with low regularity, we recall a well known variational characterization of the eigenvalues on a smooth surface.
If $\Sigma$ is a smooth surface in $\cS$, then let $\alpha$ be the corresponding smooth curve in $\cR$.
Define $\lambda_j(\alpha) = \lambda_j(\Sigma)$ for each $j$.
Let $\Lip_0(\Sigma)$ be the space of functions $f:\Sigma \to \R$ which are Lipschitz and vanish on the boundary $\del \Sigma$.
Then for each $j$,
\[
	\lambda_j(\alpha) = \min_V \max_{f \in V} \frac{\int_\Sigma | \grad f |^2 \,dS}{\int_\Sigma |f|^2 \,dS}
\]
Here the minimum is taken over all $j$-dimensional subspaces $V$ of $\Lip_0(\Sigma)$.
Also $\grad$ is the Riemannian gradient and $dS$ is the Riemannian measure on $\Sigma$.
Separation of variables shows that every eigenvalue can be realized by an eigenfunction which is the product of a radially symmetric function and a sinusoidal function.
Therefore we may express the eigenvalues in an alternative way.
Write $\alpha=(F,G)$, i.e. let $F$ and $G$ be the component functions of $\alpha$.
Let $\Lip_0(0,1)$ be the space of functions $w:[0,1] \to \R$ which are Lipschitz and vanish at the endpoints $0$ and $1$.
For a non-negative integer $k$ and a positive integer $n$, define
\[
\label{lamkndef}
	\lambda_{k,n}(\alpha) = \min_W \max_{w \in W} \frac{\int_0^1 \frac{|w'|^2 F}{|\alpha'|} + \frac{k^2 |w|^2 |\alpha'|}{F} \,dt}{\int_0^1 |w|^2 F |\alpha'| \,dt}
\]
Here the minimum is taken over all $n$-dimensional subspaces $W$ of $\Lip_0(0,1)$.
Then
\[
\label{lamknlamj}
	\Big\{ \lambda_{k,n}(\alpha) \Big\} = \Big\{ \lambda_j(\alpha) \Big\}
\]
Moreover, counting the eigenvalue $\lambda_{k,n}(\alpha)$ twice for $k \neq 0$, the multiplicities agree.

Note that the right side of \eqref{lamkndef} makes sense if the smooth curve $\alpha$ is replaced with a Lipschitz continuous curve in $\cR$.
For any curve $\alpha$ in $\cR$, we define $\lambda_{k,n}(\alpha)$ for non-negative $k$ and positive $n$ by \eqref{lamkndef}.
It is well known that if $\alpha$ is smooth, then there are only finitely many eigenvalues $\lambda_{k,n}(\alpha)$ in any bounded subset of $\R$.
This also holds when $\alpha$ is a Lipschitz continuous curve in $\cR$.
This can be shown by comparing the eigenvalues of $\alpha$ to the eigenvalues of a cylinder.
Hence, if $\alpha$ is a Lipschitz continuous curve in $\cR$, we may define the eigenvalues $\lambda_j(\alpha)$ for positive integers $j$ as follows.
Counting the eigenvalue $\lambda_{k,n}(\alpha)$ twice for $k \neq 0$, we define $\lambda_j(\alpha)$ for positive integers $j$ so that \eqref{evseq} and \eqref{lamknlamj} hold, with multiplicities agreeing in \eqref{lamknlamj}.
Let $\Sigma$ be the low regularity surface in $\cS$ corresponding to $\alpha$, and define $\lambda_{k,n}(\Sigma)=\lambda_{k,n}(\alpha)$ for all $k$ and $n$.
Likewise, for all $j$, define
\[
\label{lamjdef}
	\lambda_j(\Sigma)=\lambda_j(\alpha)
\]
We also define the area of $\Sigma$ by
\[
	\Area(\Sigma) = 2\pi \int_0^1 F |\alpha'| \,dt
\]

These definitions of eigenvalues also make sense for a slightly more general class of curves.
Let $[c,d]$ be an interval and let $\alpha:[c,d] \to \R^2_+$ be Lipschitz.
If there is a constant $\delta>0$ such that $|\alpha'(t)| \ge \delta$ for almost every $t$ in $[c,d]$, then we may define the eigenvalues $\lambda_{k,n}(\alpha)$ and $\lambda_j(\alpha)$ similarly.
For $k$ fixed, the eigenvalues $\lambda_{k,n}(\alpha)$ are the eigenvalues of the regular Sturm-Liouville problem
\[
	- \bigg( \frac{F w'}{|\alpha'|} \bigg)' + \frac{k^2 |\alpha'| w}{F} = \lambda F |\alpha'| w
\]
with boundary conditions $w(c)=w(d)=0$.
This observation yields a continuity result for the functionals $\lambda_{k,n}$.
To state this, assume there is a constant $L>0$ such that $|\alpha'(t)|=L$ for almost every $t$ in $[c,d]$.
For positive integers $m$, let $\alpha_m:[c,d] \to \R^2_+$ be a Lipschitz curve, and write $\alpha_m=(F_m,G_m)$.
Assume $F_m$ converges to $F$ uniformly over $[c,d]$ and assume $|\alpha_m'|$ converges in $L^\infty(c,d)$ to the constant function $L$.
For every $k$ and $n$,
\[
\label{kzcont}
	\lim_{m \to \infty} \lambda_{k,n}(\alpha_m) = \lambda_{k,n}(\alpha)
\]
This holds by a theorem on continuity of eigenvalues of regular Sturm-Liouville problems~\cite[Theorem 3.1]{KZ}.

\section{Existence of Maximizers}
In this section we prove Theorem \ref{exist}.
In Lemma \ref{annuluscomp}, we prove that if a surface in $\cS$ is projected onto a plane, then the eigenvalues of the resulting annulus bound the eigenvalues of the original surface.
Then in Lemma \ref{dblannuluscomp}, we show that if a surface in $\cS$ has a large eigenvalue, then it must be bounded between two cylinders about the axis $\Gamma$.
In Lemma \ref{lengthbound}, we bound the length of a surface in $\cS$ with a large eigenvalue.
Then we prove Theorem \ref{exist}, using the Arzela-Ascoli theorem.

\begin{Lemma}
\label{annuluscomp}
Let $\alpha:[c,d] \to \R^2_+$ be Lipschitz.
Assume there is a constant $L$ such that $|\alpha'(t)|=L$ for almost every $t$ in $[c,d]$.
Write $\alpha=(F_\alpha,G_\alpha)$.
Let $[\rho_1, \rho_2]$ be the image of $F_\alpha$, and assume $\rho_1<\rho_2$.
Let $A$ be an annulus in $\R^2$ of radii $\rho_1$ and $\rho_2$.
Then $\lambda_{k,n}(\alpha) \le \lambda_{k,n}(A)$ for every $k$ and $n$.
\end{Lemma}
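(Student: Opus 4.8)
The plan is to obtain the inequality from the corresponding comparison for piecewise smooth surfaces of revolution, which is established in~\cite{annulus}, by approximating $\alpha$ and invoking the continuity statement~\eqref{kzcont}. So the Lemma carries no new geometric content beyond~\cite{annulus}; it is a regularity extension.

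First I would replace $\alpha$ by polygonal curves of constant speed. Along a refining sequence of partitions $c=t_0^{(m)}<\dots<t_{N_m}^{(m)}=d$ with mesh tending to $0$, let $P_m\colon[c,d]\to\R^2$ be the piecewise linear curve interpolating $\alpha$ at these nodes, and let $\alpha_m$ be its reparametrization to constant speed $L_m=(\text{length of }P_m)/(d-c)$, still defined on $[c,d]$; write $\alpha_m=(F_m,G_m)$. Since $\alpha$ is uniformly continuous, $P_m\to\alpha$ uniformly; the length of $P_m$, being a chord sum, increases to the length of $\alpha$, namely $L(d-c)$, so $L_m\to L$; and since $|P_m'|\le L$ pointwise while the length of $P_m$ tends to $L(d-c)$, the reparametrization map $[c,d]\to[c,d]$ converges uniformly to the identity. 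Hence $F_m\to F_\alpha$ uniformly. For $m$ large, $\alpha_m$ takes values in $\R^2_+$ and $F_m$ is nonconstant, and $\alpha_m$ represents a piecewise smooth surface of revolution with $F_m$ having range $[\rho_1^{(m)},\rho_2^{(m)}]$. So, by the piecewise smooth case of the comparison established in~\cite{annulus}, $\lambda_{k,n}(\alpha_m)\le\lambda_{k,n}(A_m)$ for all $k$ and $n$, where $A_m$ is an annulus of radii $\rho_1^{(m)}$ and $\rho_2^{(m)}$.

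Next I would pass to the limit on both sides. Since $F_m\to F_\alpha$ uniformly, the extremes $\rho_1^{(m)}<\rho_2^{(m)}$ of $F_m$ converge to $\rho_1$ and $\rho_2$. Regarding $A_m$ as the surface of revolution whose meridian is the straight segment of constant speed $\rho_2^{(m)}-\rho_1^{(m)}$, the first components of these meridians converge uniformly and their constant speeds converge, so~\eqref{kzcont} applied to these segments gives $\lambda_{k,n}(A_m)\to\lambda_{k,n}(A)$ for all $k,n$; this is just continuity of the eigenvalues of an annulus in its radii. On the other hand $F_m\to F_\alpha$ uniformly and $|\alpha_m'|=L_m\to L$ in $L^\infty(c,d)$, so~\eqref{kzcont} also gives $\lambda_{k,n}(\alpha_m)\to\lambda_{k,n}(\alpha)$. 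Letting $m\to\infty$ in $\lambda_{k,n}(\alpha_m)\le\lambda_{k,n}(A_m)$ yields $\lambda_{k,n}(\alpha)\le\lambda_{k,n}(A)$, as claimed.

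The one point that deserves attention is the regularization. It is essential to reparametrize the polygonal approximants to constant speed, since that is exactly what makes the hypothesis ``$|\alpha_m'|\to L$ in $L^\infty$'' of~\eqref{kzcont} hold (mollification or raw polygonal interpolation would only give $L^1$ convergence of the speeds), and one must check that this reparametrization is a uniformly small perturbation of the identity so that $F_m\to F_\alpha$ uniformly is preserved — which it is, because $|P_m'|\le L$ and the length of $P_m$ tends to $L(d-c)$. Beyond that, the proof is just the continuity result~\eqref{kzcont} together with the piecewise smooth case from~\cite{annulus}.
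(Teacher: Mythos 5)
Your proof is correct, but it takes a genuinely different route from the paper's. The paper first restricts $\alpha$ to a subinterval on which $F_\alpha$ takes its extreme values $\rho_1,\rho_2$ precisely at the endpoints (using domain monotonicity), reflects across the endpoints via \eqref{extendalpha}, and then mollifies in two stages (Lipschitz $\to C^1$ by mollifying both components, then $C^1\to C^\infty$ by mollifying $F$ alone and solving for $G$ to preserve constant speed); the point of the endpoint reduction and symmetric extension is precisely to guarantee that the mollified $F_\epsilon$ has the \emph{same} range $[\rho_1,\rho_2]$, so the comparison annulus $A$ never changes. You sidestep all of that by allowing the range of $F_m$ to vary, comparing each $\alpha_m$ to an annulus $A_m$ with the (slightly perturbed) radii of $F_m$, and then applying \eqref{kzcont} on \emph{both} sides of the inequality — to the curves $\alpha_m$ and, independently, to the meridians of the annuli $A_m$ — letting the radii converge. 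Your polygonal-plus-constant-speed-reparametrization scheme also replaces the paper's two mollification stages with a single approximation step, and your verification that the reparametrization is a uniformly small perturbation of the identity (via $|P_m'|\le L$ and $\mathrm{length}(P_m)\to L(d-c)$) correctly delivers both hypotheses of \eqref{kzcont}. The only thing to be aware of is that you invoke the comparison from~\cite{annulus} for \emph{piecewise} smooth regular meridians rather than for smooth regular ones: the introduction of this paper does state the inequality is available for piecewise smooth surfaces, but the proof of this Lemma cites only~\cite[Lemma 2.1]{annulus} for smooth regular curves, so you should confirm that the piecewise smooth version (or a routine $C^0$ smoothing of your polygonal curve near corners) is indeed at hand. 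You should also note, as a minor point, that a partition could produce a degenerate chord if $\alpha$ returns to the same point; this is avoided by perturbing the partition or by discarding degenerate segments before reparametrizing. With those caveats, the argument is sound and somewhat more streamlined than the paper's.
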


\begin{proof}
There is a subinterval of $[c,d]$ such that $\alpha$ attains its minimum and maximum values at the endpoints of the subinterval.
By the domain monotonicity of Dirichlet eigenvalues, the eigenvalues of the restriction of $\alpha$ to this subinterval are greater than or equal to the eigenvalues of $\alpha$.
Passing to the restriction, it suffices to consider the case where $F_\alpha$ attains its minimum and maximum values $\rho_1$ and $\rho_2$ at the endpoints $c$ and $d$.
Furthermore, by a change of variables, we may assume that the domain $[c,d]$ is equal to $[0,1]$.
Note that if $\alpha$ is smooth and regular, then $\lambda_{k,n}(\alpha) \le \lambda_{k,n}(A)$ for every $k$ and $n$ by~\cite[Lemma 2.1]{annulus}.
First we use an approximation argument to extend these inequalities to continuously differentiable curves.
Then we use another approximation argument to extend these inequalities to Lipschitz curves.

For now assume that the curve $\alpha:[0,1] \to \R^2_+$ is continously differentiable and there is a constant $L$ such that $|\alpha'(t)|=L$ for every $t$ in $[c,d]$.
Define a function $\gamma:[-1,2] \to \R^2_+$ by
\[
\label{extendalpha}
	\gamma(t) =
	\begin{cases}
		\alpha(t)=2 \alpha(0)-\alpha(-t) & t \in [-1,0] \\
		\alpha(t) & t \in [0,1] \\
		\alpha(t)=2 \alpha(1)-\alpha(2-t) & t \in [1,2] \\
	\end{cases}
\]
Then the curve $\gamma$ is continuously differentiable over $[-1,2]$, with $|\gamma'(t)| = L$ for every $t$ in $[-1,2]$.
Write $\gamma=(F_\gamma,G_\gamma)$.
For $\epsilon>0$, let $F_\epsilon$ and $G_\epsilon$ be the standard mollifications of $F_\gamma$ and $G_\gamma$, respectively, see e.g.~\cite[pp. 122-123]{EG}.
Restrict the domains of $F_\epsilon$ and $G_\epsilon$ to $[0,1]$.
Then $F_\epsilon$ and $G_\epsilon$ are smooth.
Moreover $F_\epsilon$ and $G_\epsilon$ converge uniformly in $C^1$ to $F_\alpha$ and $G_\alpha$, respectively.
Define $\gamma_\epsilon=(F_\epsilon, G_\epsilon)$.
Then by \eqref{kzcont},
\[
	\lim_{\epsilon \to 0} \lambda_{k,n}(\gamma_\epsilon) = \lambda_{k,n}(\alpha)
\]
Since the standard mollifier is even and decreasing over $[0,\infty)$, the extenstion \eqref{extendalpha} yields $\rho_1 \le F_\epsilon(t) \le \rho_2$ for all $\epsilon>0$ and all $t$ in $[0,1]$.
Furthermore $F_\epsilon(0)=\rho_1$ and $F_\epsilon(1)=\rho_2$.
Note that $\lambda_{k,n}(\gamma_\epsilon) \le \lambda_{k,n}(A)$ for every $\epsilon$, $k$ and $n$ by the result mentioned above, because $\gamma_\epsilon$ is a smooth regular curve and the image of $F_\epsilon$ is $[\rho_1,\rho_2]$.
Therefore $\lambda_{k,n}(\alpha) \le \lambda_{k,n}(A)$ for every $k$ and $n$.
This completes the proof for the case where $\alpha$ is continuously differentiable.

We complete the proof by using another approximation argument.
Now assume that the curve $\alpha:[0,1] \to \R^2_+$ is Lipschitz and there is a constant $L$ such that $|\alpha'(t)|=L$ for almost every $t$ in $[0,1]$.
The argument differs from the previous one in the definition of $G_\epsilon$.
Define $\gamma:[-1,2] \to \R^2_+$ by \eqref{extendalpha}.
Write $\gamma=(F_\gamma,G_\gamma)$.
For $\epsilon>0$, let $F_\epsilon$ be the standard mollification of $F_\gamma$.
Restrict the domain of $F_\epsilon$ to $[0,1]$.
Then $F_\epsilon$ is smooth and $|F_\epsilon'(t)| \le L$ for every $t$ in $[0,1]$.
Also the image of $F_\epsilon$ is $[\rho_1,\rho_2]$.
Define $G_\epsilon: [0,1] \to \R$ so that $G_\epsilon(0)=G_\alpha(0)$ and for all $t$ in $[0,1]$,
\[
	G_\epsilon'(t) = \sqrt{L^2 - (F_\epsilon'(t))^2}
\]
Define $\gamma_\epsilon=(F_\epsilon, G_\epsilon)$.
Then $\gamma_\epsilon$ is continuously differentiable and $|\gamma_\epsilon'(t)|=L$ for all $t$ in $[0,1]$.
Moreover $F_\epsilon$ converges to $F_\alpha$ uniformly over $[0,1]$ as $\epsilon \to 0$.
By \eqref{kzcont},
\[
	\lim_{\epsilon \to 0^+} \lambda_{k,n}(\gamma_\epsilon) = \lambda_{k,n}(\alpha)
\]
Note that $\lambda_{k,n}(\gamma_\epsilon) \le \lambda_{k,n}(A)$ for every $\epsilon$, $k$ and $n$, by the argument above, because $\gamma_\epsilon$ is a continuously differentiable curve such that $|\gamma_\epsilon'(t)|=L$ for every $t$ in $[0,1]$, and the image of $F_\epsilon$ is $[\rho_1, \rho_2]$.
Therefore $\lambda_{k,n}(\alpha) \le \lambda_{k,n}(A)$  for every $k$ and $n$.
This completes the proof.
\end{proof}

We use Lemma \ref{annuluscomp} to show that if a surface in $\cS$ has a large eigenvalue, then it must be bounded between two cylinders about the axis $\Gamma$.
Recall that $R_1$ and $R_2$ are the radii of $C_1$ and $C_2$, respectively.

\begin{Lemma}
\label{dblannuluscomp}
Fix constants $b>a>0$.
Assume $a$ is less than $R_1$ and $R_2$.
Also assume $b$ is greater than $R_1$ and $R_2$.
Let $A_1$ and $A_2$ be disjoint annuli in $\R^2$ with inner radii $a$ and outer radii $R_1$ and $R_2$, respectively.
Let $B_1$ and $B_2$ be disjoint annuli in $\R^2$ with outer radii $b$ and inner radii $R_1$ and $R_2$, respectively.
Let $\alpha$ be a curve in $\cR$, and write $\alpha=(F,G)$.
If $\lambda_j(\alpha) > \lambda_j(A_1 \cup A_2)$ for some $j$, then $F(t) \ge a$ for all $t$ in $[0,1]$.
Likewise, if $\lambda_j(\alpha) > \lambda_j(B_1 \cup B_2)$ for some $j$, then $F(t) \le b$ for all $t$ in $[0,1]$.
\end{Lemma}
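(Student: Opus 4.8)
The plan is to prove both statements by contraposition, combining Lemma \ref{annuluscomp} with two applications of the domain monotonicity of Dirichlet eigenvalues: one for sub-surfaces of $\Sigma$, and one for nested planar annuli.

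For the first statement, suppose $F(t_0) < a$ for some $t_0 \in [0,1]$. Since $F$ is continuous with $F(0) = R_1 > a$ and $F(1) = R_2 > a$, I would first locate $s_1 = \min \{ t : F(t) = a \}$ and $s_2 = \max \{ t : F(t) = a \}$; a short argument using the intermediate value theorem shows $0 < s_1 < t_0 < s_2 < 1$, that $F > a$ on $[0,s_1)$ and on $(s_2,1]$, and that $F(s_1) = F(s_2) = a$. Thus $[0,s_1]$ and $[s_2,1]$ are disjoint, and the restrictions $\beta_1 = \alpha|_{[0,s_1]}$ and $\beta_2 = \alpha|_{[s_2,1]}$ are Lipschitz curves into $\R^2_+$ of constant speed $L > 0$ (here $L > 0$ because $\alpha(0) \neq \alpha(1)$) whose first components have images $[a,M_1]$ and $[a,M_2]$, where $M_1 = \max_{[0,s_1]} F$ and $M_2 = \max_{[s_2,1]} F$, so that $M_i \ge R_i > a$. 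The surfaces of revolution generated by $\beta_1$ and $\beta_2$ are disjoint sub-surfaces of $\Sigma$, so by domain monotonicity $\lambda_j(\alpha)$ is at most the $j$-th term in the sequence obtained by merging $\{\lambda_\ell(\beta_1)\}$ and $\{\lambda_\ell(\beta_2)\}$. By Lemma \ref{annuluscomp}, $\lambda_{k,n}(\beta_i) \le \lambda_{k,n}(A_{a,M_i})$ for every $k$ and $n$, where $A_{a,M_i}$ is an annulus of radii $a$ and $M_i$; hence $\lambda_\ell(\beta_i) \le \lambda_\ell(A_{a,M_i})$ for every $\ell$. Since $A_i \subseteq A_{a,M_i}$ (both have inner radius $a$ and $R_i \le M_i$), domain monotonicity for planar domains gives $\lambda_\ell(A_{a,M_i}) \le \lambda_\ell(A_i)$, so $\lambda_\ell(\beta_i) \le \lambda_\ell(A_i)$ for every $\ell$ and $i = 1,2$. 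Merging these two inequalities yields $\lambda_j(\alpha) \le \lambda_j(A_1 \cup A_2)$ for every $j$, contradicting the hypothesis.

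The second statement is symmetric: if $F(t_0) > b$ for some $t_0$, then using $R_1, R_2 < b$ I would similarly take $s_1 = \min\{t : F(t) = b\}$ and $s_2 = \max\{t : F(t) = b\}$, obtaining disjoint sub-curves $\beta_1 = \alpha|_{[0,s_1]}$ and $\beta_2 = \alpha|_{[s_2,1]}$ along which $F$ attains the value $b$ only at an endpoint, with first components having images $[m_1,b]$ and $[m_2,b]$ where $0 < m_i \le R_i < b$. Then Lemma \ref{annuluscomp} and merging give $\lambda_\ell(\beta_i) \le \lambda_\ell(A_{m_i,b})$, the inclusion $B_i \subseteq A_{m_i,b}$ gives $\lambda_\ell(A_{m_i,b}) \le \lambda_\ell(B_i)$, and merging yields $\lambda_j(\alpha) \le \lambda_j(B_1 \cup B_2)$, again contradicting the hypothesis.

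I expect the only delicate point to be the bookkeeping in the first step: verifying that a single dip of $F$ below $a$ (respectively a single excursion above $b$) already forces the existence of two disjoint sub-surfaces whose meridians meet the circle of radius $a$ (respectively $b$) precisely at their endpoints, and noting that these sub-surfaces may reach out to radius $M_i \ge R_i$ (respectively in to $m_i \le R_i$), which is exactly why one must compare with the enlarged annulus $A_{a,M_i}$ (respectively $A_{m_i,b}$) and then apply planar domain monotonicity to recover the bound by $\lambda_\ell(A_i)$ (respectively $\lambda_\ell(B_i)$). The passage from the mode-by-mode inequalities to the inequalities $\lambda_\ell(\beta_i) \le \lambda_\ell(A_i)$ and then to $\lambda_j(\alpha) \le \lambda_j(A_1 \cup A_2)$ is a routine consequence of the definition of $\lambda_j$ by merging of sequences.
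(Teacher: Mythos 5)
Your proof is correct and takes essentially the same approach as the paper: argue by contradiction, split the curve into two sub-curves, apply Lemma \ref{annuluscomp} to each piece, and use domain monotonicity twice (once to pass from the sub-surfaces to the full surface via Dirichlet bracketing, once to compare the resulting annuli to $A_i$ or $B_i$). The only difference is cosmetic: the paper simply splits at the single bad point $t_0$ (restricting to $[0,t_0]$ and $[t_0,1]$, which overlap only at one point) rather than locating the first and last times $F$ hits the threshold, so your extra bookkeeping with $s_1$ and $s_2$ is harmless but unnecessary.
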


\begin{proof}
Assume that $\lambda_j(\alpha) > \lambda_j(A_1 \cup A_2)$ for some $j$.
Fix $t_0$ in $[0,1]$ and suppose that $F(t_0) <a$.
Note that $t_0 \neq 0$ and $t_0 \neq 1$, because $\alpha$ is in $\cR$.
Let $\beta$ and $\gamma$ be the restrictions of $\alpha$ to $[0,t_0]$ and $[t_0,1]$, respectively.
Then $\lambda_{k,n}(\beta) \le \lambda_{k,n}(A_1)$ for all $k$ and $n$, by Lemma \ref{annuluscomp} and by the domain monotonicity of Dirichlet eigenvalues.
Hence $\lambda_j(\beta) \le \lambda_j(A_1)$ for all $j$.
Likewise $\lambda_j(\gamma) \le \lambda_j(A_2)$ for all $j$.
Therefore $\lambda_j(\alpha) \le \lambda_j(A_1 \cup A_2)$ for all $j$.
This contradiction shows that $F(t) \ge a$ for all $t$ in $[0,1]$.
A similar argument can be used to prove that if $\lambda_j(\alpha) > \lambda_j(B_1 \cup B_2)$ for some $j$, then $F(t) \le b$ for all $t$ in $[0,1]$.
\end{proof}

The following lemma provides an upper bound for eigenvalues of a curve in terms of the length.

\begin{Lemma}
\label{lengthbound}
Let $\alpha$ be a curve in $\cR$.
Write $\alpha=(F,G)$, and let $L$ be the length of $\alpha$.
Assume that there are positive constants $a$ and $b$ such that $a \le F(t) \le b$ for every $t$ in $[0,1]$.
Fix $j$.
Then
\[
	\lambda_j(\alpha) \le \frac{j^2 \pi^2 b}{L^2 a}
\]
\end{Lemma}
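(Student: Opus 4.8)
The plan is to bound $\lambda_j(\alpha)$ by constructing an explicit $j$-dimensional test space in the variational characterization \eqref{lamkndef} with $k=0$, using the coordinate along the meridian. Since the eigenvalues $\lambda_{k,n}(\alpha)$ with $k=0$ are among the $\lambda_j(\alpha)$ by \eqref{lamknlamj}, and since $\lambda_j(\alpha)$ is the $j$-th smallest among all $\lambda_{k,n}$ counted with multiplicity, we have in particular $\lambda_j(\alpha) \le \lambda_{0,j}(\alpha)$. So it suffices to estimate $\lambda_{0,j}(\alpha)$, which by \eqref{lamkndef} is
\[
	\lambda_{0,j}(\alpha) = \min_W \max_{w \in W} \frac{\int_0^1 |w'|^2 F/|\alpha'| \,dt}{\int_0^1 |w|^2 F |\alpha'| \,dt},
\]
the minimum over $j$-dimensional subspaces $W$ of $\Lip_0(0,1)$.

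First I would observe that $|\alpha'| = L$ almost everywhere, so the quotient simplifies to $\frac{1}{L^2}\int_0^1 |w'|^2 F \,dt \big/ \int_0^1 |w|^2 F \,dt$. Using $a \le F \le b$ pointwise, this is bounded above by $\frac{b}{L^2 a} \cdot \int_0^1 |w'|^2 \,dt \big/ \int_0^1 |w|^2 \,dt$. Thus
\[
	\lambda_{0,j}(\alpha) \le \frac{b}{L^2 a} \cdot \min_W \max_{w \in W} \frac{\int_0^1 |w'|^2 \,dt}{\int_0^1 |w|^2 \,dt},
\]
and the remaining min-max over $j$-dimensional subspaces of $\Lip_0(0,1)$ is precisely the $j$-th Dirichlet eigenvalue of the interval $[0,1]$, which equals $j^2 \pi^2$ (realized by the span of $\sin(\pi t), \sin(2\pi t), \dots, \sin(j\pi t)$). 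Combining these gives $\lambda_j(\alpha) \le \lambda_{0,j}(\alpha) \le j^2 \pi^2 b/(L^2 a)$, as claimed.

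I do not anticipate a serious obstacle here; the only points requiring a little care are: (i) justifying $\lambda_j(\alpha) \le \lambda_{0,j}(\alpha)$ directly from the definition of $\lambda_j(\alpha)$ via \eqref{lamknlamj} and the fact that eigenvalues are listed in increasing order with the stated multiplicities — this is immediate since $\lambda_{0,j}$ is the $j$-th eigenvalue in the $k=0$ family alone, hence is at least the $j$-th overall; and (ii) confirming that $\sin(n\pi t) \in \Lip_0(0,1)$ and that these functions span a $j$-dimensional space realizing the value $j^2\pi^2$ in the min-max for the plain Dirichlet problem on $[0,1]$, which is standard.
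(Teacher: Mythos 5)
Your proof is correct and follows essentially the same strategy as the paper: reduce to the radial eigenvalues $\lambda_{0,j}(\alpha)$, then produce an explicit $j$-dimensional test space to bound $\lambda_{0,j}$ via the min-max characterization \eqref{lamkndef}. The only minor difference is the choice of test space: you use $\operatorname{span}\{\sin(n\pi t): n=1,\dots,j\}$ and pull the factor $b/(L^2a)$ out of the quotient before identifying the remaining min-max with the $j$-th Dirichlet eigenvalue of $[0,1]$, whereas the paper uses $j$ disjointly supported rescaled bumps $w_i(t)=\sin(j\pi t)\mathbf{1}_{[(i-1)/j,\,i/j]}$ and bounds the Rayleigh quotient on each piece separately. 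Both choices realize the value $j^2\pi^2$ and both are legitimate; yours is arguably the more transparent reduction, the paper's avoids any appeal to the exact value of the interval's $j$-th eigenvalue by making the maximum over the span elementary via disjoint supports.
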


\begin{proof}
For each $i=1,2,\ldots,j$, define a function $w_i$ in $\Lip_0(0,1)$ by
\[
	w_i(t) =
	\begin{cases}
		\sin(j \pi t) & \frac{i-1}{j} \le t \le \frac{i}{j} \\
		0 & \text{otherwise} \\
	\end{cases}
\]
For each $i$,
\[
\label{lengthbound1}
	\frac{\int_0^1 \frac{|w_i'|^2 F}{|\alpha'|} \,dt}{\int_0^1 |w_i|^2 F |\alpha'| \,dt}
	\le \frac{j^2 \pi^2 b}{L^2 a} 
\]
Let $W$ be the $j$-dimensional subspace of $\Lip_0(0,1)$ generated by the functions $w_i$.
Since the supports of these functions are disjoint, it follows from \eqref{lengthbound1} that
\[
	\max_{w \in W} \frac{\int_0^1 \frac{|w'|^2 F}{|\alpha'|} \,dt}{\int_0^1 |w|^2 F |\alpha'| \,dt} \le \frac{j^2 \pi^2 b}{L^2 a}
\]
Therefore
\[
	\lambda_{0,j}(\alpha) \le \max_{w \in W} \frac{\int_0^1 \frac{|w'|^2 F}{|\alpha'|} \,dt}{\int_0^1 |w|^2 F |\alpha'| \,dt}
	\le \frac{j^2 \pi^2 b}{L^2 a}
\]
Since $\lambda_j(\alpha) \le \lambda_{0,j}(\alpha)$, this completes the proof.
\end{proof}

Now we can prove Theorem \ref{exist}.

\begin{proof}[Proof of Theorem 1.1]
Fix $j$.
Fix constants $b>a>0$.
Assume $a$ is less than $R_1$ and $R_2$.
Also assume $b$ is greater than $R_1$ and $R_2$.
Let $A_1$ and $A_2$ be disjoint annuli in $\R^2$ with inner radii $a$ and outer radii $R_1$ and $R_2$, respectively.
Let $B_1$ and $B_2$ be disjoint annuli in $\R^2$ with outer radii $b$ and inner radii $R_1$ and $R_2$, respectively.
Recall that $\Lambda_j > \lambda_j(D_1 \cup D_2)$, by assumption.
Therefore, if $a$ is small, then $\Lambda_j > \lambda_j(A_1 \cup A_2)$.
For a proof of this, we refer to Rauch and Taylor~\cite{RT}, who considered a much more general problem.
If $b$ is large, then $\Lambda_j > \lambda_j(B_1 \cup B_2)$.
Let $\Sigma_i$ be a sequence in $\cS$ such that
\[
	\lim_{i \to \infty} \lambda_j(\Sigma_i) = \Lambda_j
\]
We may assume that $\lambda_j(\Sigma_i) > \lambda_j(A_1 \cup A_2)$ and $\lambda_j(\Sigma_i) > \lambda_j(B_1 \cup B_2)$ for every $i$.
Let $\alpha_i$ be the curve in $\cR$ corresponding to $\Sigma_i$.
Write $\alpha_i=(F_i,G_i)$.
Then $a \le F_i(t) \le b$ for every $t$ in $[0,1]$ and every $i$, by Lemma \ref{dblannuluscomp}.
Let $L_i$ be the length of $\alpha_i$.
By Lemma~\ref{lengthbound}, the lengths $L_i$ are uniformly bounded.
By passing to a subsequence, we may assume that there is a $L>0$ such that the lengths $L_i$ converges to $L$.
By applying the Arzela-Ascoli theorem and passing to a subsequence, we may assume that the curves $\alpha_i$ converge uniformly to a curve $\alpha:[0,1] \to \R^2_+$.
Moreover $|\alpha'(t)| \le L$ for almost every $t$ in $[0,1]$.
Write $\alpha=(F,G)$.
Then $F:[0,1] \to [a,b]$ satisfies $|F'(t)| \le L$ for almost every $t$ in $[0,1]$.
There is a Lipschitz function $H:[0,1] \to \R$ such that, for almost every $t$ in $[0,1]$,
\[
	(F'(t))^2+(H'(t))^2 = L^2
\]
Note that $|H'(t)| \ge |G'(t)|$ for almost every $t$ in $[0,1]$.
Moreover we may choose the function $H$ so that $H(0)=G(0)$ and $H(1)=G(1)$.
Define $\beta=(F,H)$.
Then $\beta$ is in $\cR$.
By \eqref{kzcont}, for every $k$ and $n$,
\[
	\lambda_{k,n}(\beta) = \lim_{i \to \infty} \lambda_{k,n}(\alpha_i)
\]
Therefore
\[
	\lambda_j(\beta) \ge \limsup_{i \to \infty} \lambda_j(\alpha_i) = \Lambda_j
\]
Let $\Sigma_j$ be the surface in $\cS$ corresponding to $\beta$.
Then
\[
	\lambda_j(\Sigma_j) = \lambda_j(\beta) \ge \Lambda_j
\]
Since $\Sigma_j$ is in $\cS$, this implies that $\lambda_j(\Sigma_j) = \Lambda_j$.
\end{proof}

\section{Convergence to the Catenoid}

In this section, we prove Theorem \ref{catenoid}.
In fact we prove a slightly stronger statement, see Lemma~\ref{strongcat}.
In Lemma \ref{uniannulus}, we show that a sequence of surfaces with large eigenvalues is uniformly bounded between two cylinders about the axis $\Gamma$.
In Lemma \ref{unilength}, we show that such a sequence of surfaces have uniformly bounded length.
Lemma \ref{rect} provides bounds for eigenvalues on rectangles, which we later use to establish \eqref{sketch2}.
In Lemma \ref{hausdorff}, we show that surfaces with small area must approximate the minimizing catenoid.
Then we prove Lemma \ref{strongcat}, which establishes Theorem \ref{catenoid}.

\begin{Lemma}
\label{uniannulus}
Assume there is a catenoid $\Sigma_\cat$ which is the unique area minimizing surface with boundary given by $C_1$ and $C_2$.
Let $\alpha_j$ be curves in $\cR$ such that
\[
\label{jweylannulus}
	\liminf_{j \to \infty} \frac{\lambda_j(\alpha_j)}{j} \ge \frac{4\pi}{\Area(\Sigma_\cat)}
\]
Write $\alpha_j=(F_j,G_j)$.
Then there are positive constants $a$ and $b$ such that we have $a \le F_j(t) \le b$ for large $j$ and every $t$ in $[0,1]$.
\end{Lemma}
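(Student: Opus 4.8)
The plan is to fix the constants $a$ and $b$ first, depending only on the catenoid $\Sigma_\cat$, and then to apply the annulus comparison of Lemma \ref{dblannuluscomp} separately to each curve $\alpha_j$ for $j$ large. The only place where the hypothesis that $\Sigma_\cat$ is the \emph{unique} area minimizer is used is the strict inequality
\[
	\Area(\Sigma_\cat) < \Area(D_1 \cup D_2) = \pi R_1^2 + \pi R_2^2 .
\]
This holds because $D_1 \cup D_2$ has boundary $C_1 \cup C_2$, so $\Area(\Sigma_\cat) \le \Area(D_1 \cup D_2)$; and equality would make $D_1 \cup D_2$ a second area minimizer, contradicting uniqueness, since the connected catenoid $\Sigma_\cat$ is not a union of two discs.

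First I would choose the inner radius. For $a>0$, let $A_1$ and $A_2$ be disjoint annuli in $\R^2$ with inner radii $a$ and outer radii $R_1$ and $R_2$, as in Lemma \ref{dblannuluscomp}, so that $\Area(A_1 \cup A_2) = \pi R_1^2 + \pi R_2^2 - 2\pi a^2$. By the strict inequality above, for $a$ sufficiently small we have both $a < \min\{R_1,R_2\}$ and $\Area(A_1 \cup A_2) > \Area(\Sigma_\cat)$; fix such an $a$. Similarly, for $b>0$ let $B_1$ and $B_2$ be disjoint annuli with outer radii $b$ and inner radii $R_1$ and $R_2$, so that $\Area(B_1 \cup B_2) = 2\pi b^2 - \pi R_1^2 - \pi R_2^2 \to \infty$ as $b \to \infty$; fix $b$ large enough that $b > \max\{R_1,R_2\}$ and $\Area(B_1 \cup B_2) > \Area(\Sigma_\cat)$.

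Next I would use Weyl's law for the bounded planar domains $A_1 \cup A_2$ and $B_1 \cup B_2$. It gives $\lambda_j(A_1 \cup A_2)/j \to 4\pi / \Area(A_1 \cup A_2)$ and $\lambda_j(B_1 \cup B_2)/j \to 4\pi / \Area(B_1 \cup B_2)$ as $j \to \infty$, and both limits are strictly less than $4\pi / \Area(\Sigma_\cat)$ by the choices of $a$ and $b$. Combining this with the hypothesis \eqref{jweylannulus}, there is an integer $J$ such that $\lambda_j(\alpha_j) > \lambda_j(A_1 \cup A_2)$ and $\lambda_j(\alpha_j) > \lambda_j(B_1 \cup B_2)$ for every $j \ge J$. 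For each such $j$, Lemma \ref{dblannuluscomp} then yields $a \le F_j(t) \le b$ for all $t$ in $[0,1]$, which is the claim.

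I expect no real obstacle beyond the strict inequality $\Area(\Sigma_\cat) < \pi R_1^2 + \pi R_2^2$: this is precisely where uniqueness of the minimizing catenoid is indispensable, since it is what lets the limiting area of $A_1 \cup A_2$ as $a \to 0$ strictly exceed $\Area(\Sigma_\cat)$, and hence lets the eigenvalue growth rate of $A_1 \cup A_2$ be pushed strictly below the rate forced on the $\alpha_j$ by \eqref{jweylannulus}. The remaining steps are routine uses of Lemma \ref{dblannuluscomp}, Weyl's law, and the elementary area formulas for annuli.
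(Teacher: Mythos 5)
Your proof is correct and follows essentially the same route as the paper: fix $a$ small and $b$ large so that the annulus areas exceed $\Area(\Sigma_\cat)$, apply Weyl's law to push the annulus eigenvalue growth rate below $4\pi/\Area(\Sigma_\cat)$, and invoke Lemma \ref{dblannuluscomp}. You have additionally made explicit the step the paper leaves implicit, namely that uniqueness of the minimizing catenoid forces $\Area(\Sigma_\cat) < \Area(D_1 \cup D_2)$, which is what allows $\Area(A_1 \cup A_2) > \Area(\Sigma_\cat)$ as $a \to 0$.
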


\begin{proof}
Fix constants $b>a>0$.
Assume $a$ is less than $R_1$ and $R_2$.
Also assume $b$ is greater than $R_1$ and $R_2$.
Let $A_1$ and $A_2$ be disjoint annuli in $\R^2$ of inner radii $a$ and outer radii $R_1$ and $R_2$, respectively.
Let $B_1$ and $B_2$ be disjoint annuli in $\R^2$ of outer radii $b$ and inner radii $R_1$ and $R_2$, respectively.
If $a$ is small and $b$ is large, then we have $\Area(\Sigma_\cat) < \Area(A_1 \cup A_2)$ and $\Area(\Sigma_\cat) < \Area(B_1 \cup B_2)$.
Then by Weyl's law,
\[
	\frac{4\pi}{\Area(\Sigma_\cat)} > \lim_{j \to \infty} \frac{\lambda_j(A_1 \cup A_2)}{j}
\]
Likewise,
\[
	\frac{4\pi}{\Area(\Sigma_\cat)} >  \lim_{j \to \infty} \frac{\lambda_j(B_1 \cup B_2)}{j}
\]
Hence \eqref{jweylannulus} implies that $\lambda_j(\alpha_j) >  \lambda_j(A_1 \cup A_2)$ and $\lambda_j(\alpha_j) >  \lambda_j(B_1 \cup B_2)$ for large $j$.
Therefore $a \le F_j(t) \le b$ for large $j$ and for every $t$ in $[0,1]$, by Lemma \ref{dblannuluscomp}.
\end{proof}

Next we show that the lengths of the eigenvalue maximizing curves are uniformly bounded.
We prove this by comparing the eigenvalues to those of a cylinder.

\begin{Lemma}
\label{unilength}
Assume there is a catenoid $\Sigma_\cat$ which is the unique area minimizing surface with boundary given by $C_1$ and $C_2$.
Let $\alpha_j$ be curves in $\cR$ such that
\[
\label{jweyllength}
	\liminf_{j \to \infty} \frac{\lambda_j(\alpha_j)}{j} \ge \frac{4\pi}{\Area(\Sigma_\cat)}
\]
Then the lengths $L_j$ of the curves $\alpha_j$ are uniformly bounded.
\end{Lemma}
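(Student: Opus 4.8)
The plan is to argue by contradiction: suppose the lengths $L_j$ are not uniformly bounded, and pass to a subsequence along which $L_j \to \infty$ (so that $j \to \infty$ as well along this subsequence). I will show that $\lambda_j(\alpha_j)/j \to 0$ along this subsequence, which contradicts \eqref{jweyllength}. The first step is to apply Lemma \ref{uniannulus} to obtain constants $0 < a < b$ and an index $j_1$ with $a \le F_j(t) \le b$ for all $j \ge j_1$ and all $t \in [0,1]$; since $L_j \to \infty$, we also have $L_j \ge a$ for large $j$.

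The second step is to compare the eigenvalues of $\alpha_j$ with those of the cylinder of radius $a$ and meridian length $L_j$, namely the curve $\sigma_j(t) = (a, L_j t)$ on $[0,1]$, for which a direct computation from \eqref{lamkndef} gives $\lambda_{k,n}(\sigma_j) = n^2\pi^2/L_j^2 + k^2/a^2$ for every $k \ge 0$ and $n \ge 1$. Using the variational formula \eqref{lamkndef} with the test subspace spanned by $\sin(\pi t), \dots, \sin(n \pi t)$ together with the bounds $a \le F_j \le b$ and $|\alpha_j'| = L_j$ a.e., one obtains $\lambda_{k,n}(\alpha_j) \le (b/a)\,\lambda_{k,n}(\sigma_j)$ for all $k$, $n$ and all $j \ge j_1$. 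Since $\{\lambda_l(\alpha_j)\}$ is the nondecreasing rearrangement of $\{\lambda_{k,n}(\alpha_j)\}$ with the $k \ne 0$ terms repeated twice, and likewise for $\sigma_j$, the elementary fact that a term-by-term inequality over a common index set is inherited by nondecreasing rearrangements yields $\lambda_j(\alpha_j) \le (b/a)\,\mu_j$, where $\mu_j$ is the $j$-th smallest element of the multiset $\{\, n^2\pi^2/L_j^2 + k^2/a^2 : k \ge 0,\ n \ge 1 \,\}$, the entries with $k \ne 0$ counted twice.

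The third step is to estimate $\mu_j$. For integers $K \ge 0$ and $N \ge 1$ with $(2K+1)N \ge j$, the pairs $(k,n)$ with $0 \le k \le K$ and $1 \le n \le N$ account for at least $j$ entries of the multiset, each at most $N^2\pi^2/L_j^2 + K^2/a^2$; hence $\mu_j \le N^2\pi^2/L_j^2 + K^2/a^2$. Choosing $K$ of order $(j/L_j)^{1/2}$ and $N$ of order $(j L_j)^{1/2}$, with the implied constants arranged so that $(2K+1)N \ge j$, makes both $N^2\pi^2/L_j^2$ and $K^2/a^2$ of order $j/L_j$, so $\mu_j = O(j/L_j)$. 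Therefore $\lambda_j(\alpha_j)/j \le (b/a)\,\mu_j/j = O(1/L_j) \to 0$, which is the desired contradiction; hence $L_j$ is uniformly bounded.

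I expect the estimate of $\mu_j$ to be the point requiring care. The crude choice $K = 0$, $N = j$ — essentially what Lemma \ref{lengthbound} exploits — gives only $\mu_j \le j^2\pi^2/L_j^2$, and this is not $o(j)$ when $L_j$ grows no faster than $j^{1/2}$; one genuinely must trade meridian modes against rotational modes so that the dominant term in the bound is of order $j/L_j$ rather than $j^2/L_j^2$. The remaining ingredients — the Rayleigh-quotient estimate $\lambda_{k,n}(\alpha_j) \le (b/a)\lambda_{k,n}(\sigma_j)$, the passage from the term-by-term inequalities to the inequality for $\lambda_j$, and the bookkeeping of the factor-two multiplicities for $k \ne 0$ — are routine.
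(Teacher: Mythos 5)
Your proof is correct. The opening steps coincide with the paper's: you invoke Lemma \ref{uniannulus} to trap $F_j$ in $[a,b]$, and then the Rayleigh-quotient estimate with $a \le F_j \le b$ and $|\alpha_j'| = L_j$ gives $\lambda_{k,n}(\alpha_j) \le (b/a)\lambda_{k,n}(C_j)$ for the cylinder $C_j$ of radius $a$ and height $L_j$ --- this is exactly the paper's \eqref{unilength1}--\eqref{unilength2}. Where you diverge is in finishing. The paper fixes an arbitrary $M$, observes that along the subsequence with $L_j \ge M$ domain monotonicity gives $\lambda_j(C_j) \le \lambda_j(C_M)$ for the fixed cylinder $C_M$ of height $M$, and then invokes Weyl's law for $C_M$ to conclude $\liminf_j \lambda_j(\alpha_j)/j \le (b/a)\,4\pi/\Area(C_M)$, which tends to zero as $M \to \infty$. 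You instead estimate $\lambda_j(C_j)$ directly by counting lattice points $(k,n)$ with $n^2\pi^2/L_j^2 + k^2/a^2$ small, choosing $K \asymp (j/L_j)^{1/2}$ and $N \asymp (jL_j)^{1/2}$ to get the quantitative bound $\lambda_j(C_j) = O(j/L_j)$. Your route is more elementary and more explicit --- it avoids both domain monotonicity and the asymptotic form of Weyl's law, and it gives a rate $\lambda_j(\alpha_j)/j = O(1/L_j)$ rather than just a $\liminf$ statement --- at the cost of the lattice-counting bookkeeping that you correctly identify as the delicate point (the crude $K=0$ choice fails, as you note). In substance, you have unpacked Weyl's law for the cylinder by hand; the paper keeps it as a black box and compensates by passing through the auxiliary fixed cylinder $C_M$.
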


\begin{proof}
Write $\alpha_j=(F_j,G_j)$.
There are positive constants $a$ and $b$ such that we have $a \le F_j(t) \le b$ for large $j$ and every $t$ in $[0,1]$, by Lemma \ref{uniannulus}.
Therefore, if $j$ is large, then for any function $w$ in $\Lip_0(0,1)$ and any $k$,
\[
\label{unilength1}
\begin{split}
	\frac{\int_0^1 \frac{|w'|^2 F_j}{|\alpha_j'|} + \frac{k^2 |w|^2 |\alpha_j'|}{F_j} \,dt}{\int_0^1 |w|^2 F_j |\alpha_j'| \,dt}
		&\le \frac{\int_0^1 \frac{|w'|^2 b}{L_j} + \frac{k^2 |w|^2 L_j}{a} \,dt}{\int_0^1 |w|^2 a L_j \,dt} \\
		&\le \frac{b}{a} \frac{\int_0^1 \frac{|w'|^2 a}{L_j} + \frac{k^2 |w|^2 L_j}{a} \,dt}{\int_0^1 |w|^2 a L_j \,dt} \\
\end{split}
\]
Let $C_j$ be a cylinder of radius $a$ and height $L_j$.
By \eqref{unilength1}, if $j$ is large, then for any $k$ and $n$,
\[
	\lambda_{k,n}(\alpha_j) \le \frac{b}{a} \lambda_{k,n}(C_j)
\]
Hence if $j$ is large,
\[
\label{unilength2}
	\lambda_j(\alpha_j) \le \frac{b}{a} \lambda_j(C_j)
\]
Suppose the lengths $L_j$ are unbounded.
Let $M>0$.
Then there are infinitely many $j$ such that $L_j \ge M$.
Let $C_M$ be a cylinder of radius $a$ and height $M$.
For all $j$ such that $L_j \ge M$, the domain monotonicity of Dirichlet eigenvalues implies that
\[
\label{unilength3}
	\lambda_j(C_j) \le \lambda_j(C_M)
\]
Therefore, by \eqref{unilength2}, \eqref{unilength3}, and Weyl's law,
\[
	\liminf_{j \to \infty} \frac{ \lambda_j(\alpha_j) }{j} \le \liminf_{j \to \infty} \frac{b}{a} \frac{ \lambda_j(C_j) }{j} \le \lim_{j \to \infty} \frac{b}{a} \frac{ \lambda_j(C_M) }{j} = \frac{b}{a} \frac{4\pi}{\Area(C_M)}
\]
Since $M$ may be arbitrarily large, this yields
\[
\label{unilength4}
	\liminf_{j \to \infty} \frac{ \lambda_j(\alpha_j) }{j} = 0
\]
Since \eqref{unilength4} and \eqref{jweyllength} are contradictory, the lengths $L_j$ must be uniformly bounded.
\end{proof}

The following lemma provides bounds for Dirichlet eigenvalues on a union of rectangles.
These estimates are useful because they only depend on the area and perimeter of the rectangles and are otherwise independent of the choice of rectangles.
These estimates can also be used to bound Dirichlet eigenvalues on cylinders.
In the proof of Theorem \ref{catenoid}, we apply these estimates to obtain bounds for the eigenvalues $\lambda_j(\Sigma_j)$ in terms of the area of $\Sigma_j$, in order to establish \eqref{sketch2}.

\begin{Lemma}
\label{rect}
Let $Q_1, \ldots Q_N$ be disjoint compact rectangles in $\R^2$.
Define $Q = \cup Q_i$.
For every $j$ such that $\lambda_j(Q) > 1$,
\[
\label{rect0}
	\frac{4\pi j}{\lambda_j(Q)-1} \ge \Area(Q) - 2 \operatorname{Perimeter}(Q) \Big(\lambda_j(Q)-1 \Big)^{-1/2}
\]
\end{Lemma}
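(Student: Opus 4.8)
The plan is to exploit the explicit form of the Dirichlet spectrum of a rectangle together with a lattice-point counting argument, reducing \eqref{rect0} to a Weyl-type estimate with an explicit remainder. First I would recall that for a single rectangle $Q_i$ with side lengths $p_i$ and $q_i$, the eigenvalues of $-\Delta$ with Dirichlet boundary conditions are $\pi^2(m^2/p_i^2 + n^2/q_i^2)$ for positive integers $m,n$. Hence the counting function $N_{Q_i}(\mu) = \#\{\ell : \lambda_\ell(Q_i) \le \mu\}$ equals the number of lattice points $(m,n)$ with $m,n \ge 1$ lying inside the ellipse $\pi^2(m^2/p_i^2 + n^2/q_i^2) \le \mu$. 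Since $Q = \cup Q_i$ is a disjoint union, $N_Q(\mu) = \sum_i N_{Q_i}(\mu)$, and $\lambda_j(Q) > \mu$ is equivalent to $N_Q(\mu) < j$.

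The key step is a one-sided lattice-point bound: associate to each admissible $(m,n)$ the unit square $[m-1,m] \times [n-1,n]$, which lies in the first quadrant and, when $\pi^2(m^2/p_i^2 + n^2/q_i^2) \le \mu$, is contained in the quarter-ellipse $\{(x,y) : x,y \ge 0,\ \pi^2(x^2/p_i^2 + y^2/q_i^2) \le \mu\}$; this shows $N_{Q_i}(\mu)$ is at most the area of that quarter-ellipse, namely $\tfrac{1}{4}\pi p_i q_i (\mu/\pi^2) = \tfrac{\mu}{4\pi}\Area(Q_i)$. This is too crude — it misses the perimeter correction — so instead I would translate the square by $(1/2,1/2)$, so the union of the squares over admissible $(m,n)$ covers a region which contains the quarter-ellipse of parameter $\mu$ shrunk appropriately, or more cleanly: the union of these unit squares contains every point of the open quarter-ellipse whose distance to the coordinate axes exceeds $1$. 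The area of the quarter-ellipse of semi-axes $(p_i/\pi)\sqrt{\mu}$ and $(q_i/\pi)\sqrt{\mu}$ that is lost by removing a strip of width $1$ along each axis is at most $(p_i + q_i)\sqrt{\mu}/\pi$, so we get
\[
	N_{Q_i}(\mu) \ge \frac{\mu}{4\pi}\Area(Q_i) - \frac{\sqrt{\mu}}{\pi}(p_i + q_i) = \frac{\mu}{4\pi}\Area(Q_i) - \frac{\sqrt{\mu}}{2\pi}\operatorname{Perimeter}(Q_i).
\]
Summing over $i$ gives $N_Q(\mu) \ge \tfrac{\mu}{4\pi}\Area(Q) - \tfrac{\sqrt{\mu}}{2\pi}\operatorname{Perimeter}(Q)$.

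Now I would specialize to $\mu = \lambda_j(Q) - 1$. Since $\lambda_j(Q) > 1$ we have $\mu > 0$, and $N_Q(\mu) \le j - 1 < j$ because $\mu < \lambda_j(Q)$ (here I use that $\mu = \lambda_j(Q)-1$ is strictly below $\lambda_j(Q)$, so at most $j-1$ eigenvalues are $\le \mu$). Combining with the lower bound on $N_Q(\mu)$:
\[
	j > \frac{\lambda_j(Q)-1}{4\pi}\Area(Q) - \frac{\sqrt{\lambda_j(Q)-1}}{2\pi}\operatorname{Perimeter}(Q).
\]
Multiplying through by $4\pi/(\lambda_j(Q)-1)$ and rearranging yields exactly
\[
	\frac{4\pi j}{\lambda_j(Q)-1} \ge \Area(Q) - 2\operatorname{Perimeter}(Q)\,\big(\lambda_j(Q)-1\big)^{-1/2},
\]
which is \eqref{rect0}.

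The main obstacle I anticipate is making the quarter-ellipse strip estimate clean and genuinely one-sided: one must be careful that the unit squares attached to admissible lattice points really do cover the claimed sub-region of the quarter-ellipse, since near the axes a point of the ellipse might fail to be covered, and one needs the bound to go in the direction $N_{Q_i}(\mu) \ge (\text{area term}) - (\text{perimeter term})$ rather than the reverse. A careful choice of which corner of the unit square to attach to each lattice point, together with the observation that for $x \in [0, (p_i/\pi)\sqrt{\mu} - 1]$ the point $(x+1, \cdot)$ staying in the ellipse forces $(x, \cdot)$ to as well (by monotonicity in each variable of the ellipse's defining function), handles this; quantifying the discarded near-axis strips by the crude bound $(\text{width }1) \times (\text{semi-axis length})$ for each of the two strips gives the perimeter term with the stated constant. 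All remaining manipulations are elementary algebra, and the passage from a single rectangle to the union is immediate by additivity of both $N_Q$ and the right-hand side quantities over the disjoint pieces.
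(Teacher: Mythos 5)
Your overall plan coincides with the paper's: obtain a per-rectangle lower bound on the Dirichlet counting function $D(\lambda,Q_m)$, sum over the disjoint rectangles, set $\lambda=\lambda_j(Q)-1$, use $D(\lambda_j(Q)-1,Q)<j$, and rearrange. The only structural difference is that the paper simply cites the per-rectangle inequality $4\pi D(\lambda,Q_m)/\lambda \ge \Area(Q_m)-2\operatorname{Perimeter}(Q_m)\lambda^{-1/2}$ from Laugesen's lecture notes, whereas you re-derive it from scratch via lattice-point counting; that is a legitimate, more self-contained route and buys nothing logically but does make the lemma independent of the reference.

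There is, however, a genuine slip in the covering step. The claim that ``the union of these unit squares contains every point of the open quarter-ellipse whose distance to the coordinate axes exceeds $1$'' is false. Take $p_i=q_i=\pi$ and $\mu=9$: the quarter-disc $\{x,y\ge 0,\ x^2+y^2\le 9\}$ contains $(2.1,2.1)$, whose distance to each axis exceeds $1$, yet the admissible lattice points are $(1,1),(1,2),(2,1),(2,2)$ and their squares union to $[0,2]^2$, which misses $(2.1,2.1)$. What the squares actually cover is the set $\{(x,y): x,y\ge 0,\ \pi^2((x+1)^2/p_i^2+(y+1)^2/q_i^2)\le\mu\}$, i.e.\ the translate by $(-1,-1)$ of $E\cap\{x\ge 1,\ y\ge 1\}$ (with $E$ the closed quarter-ellipse): for such $(x,y)$, the point $(\lfloor x\rfloor+1,\lfloor y\rfloor+1)$ is admissible by monotonicity of the ellipse's defining function and its square contains $(x,y)$. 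The area of $E$ outside $E\cap\{x\ge1,\ y\ge1\}$ is bounded by the two unit-width boundary strips, at most $a+b=(p_i+q_i)\sqrt\mu/\pi$, which is precisely the perimeter term you want. Your closing remark about ``$(x+1,\cdot)$ staying in the ellipse forces $(x,\cdot)$ to as well'' is the right idea, but the covering claim as you stated it must be replaced by the shifted-set version above; once that correction is made, the rest of the argument is sound and yields the lemma exactly as in the paper.
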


\begin{proof}
For each $m=1,2,\ldots,N$, let $D(\lambda,Q_m)$ be the number of Dirichlet eigenvalues on $Q_m$ which are less than or equal to $\lambda$.
For every $\lambda>0$ and every $m$, it is well known that
\[
\label{rect1}
	\frac{4\pi D(\lambda,Q_m)}{\lambda} \ge \Area(Q_m) - 2 \operatorname{Perimeter}(Q_m) \lambda^{-1/2}
\]
See e.g.~\cite[p. 19]{L}.
Define $D(\lambda,Q)$ similarly, and note that
\[
	D(\lambda, Q) = D(\lambda,Q_1)+\ldots+D(\lambda,Q_N)
\]
Hence summing over $m$ in \eqref{rect1} yields
\[
	\frac{4\pi D(\lambda,Q)}{\lambda} \ge \Area(Q) - 2 \operatorname{Perimeter}(Q) \lambda^{-1/2}
\]
Fix $j$ such that $\lambda_j(Q) >1$ and set $\lambda=\lambda_j(Q)-1$.
Then $j \ge D(\lambda_j(Q) -1,Q)$, so \eqref{rect0} follows.
\end{proof}

If there is a catenoid which is the unique area minimizer, then the next lemma shows that surfaces in $\cS$ with small area must approximate the catenoid in the Hausdorff metric.

\begin{Lemma}
\label{hausdorff}
Assume there is a catenoid $\Sigma_\cat$ which is the unique area minimizing surface with boundary given by $C_1$ and $C_2$.
Let $\Sigma_j$ be a sequence of surfaces in $\cS$ such that
\[
\label{areasjcat}
	\lim_{j \to \infty} \Area(\Sigma_j) = \Area(\Sigma_\cat)
\]
Then the surfaces $\Sigma_j$ converge to $\Sigma_\cat$ in the Hausdorff metric.
\end{Lemma}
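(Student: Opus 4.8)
The plan is to argue by contradiction using compactness. Suppose the conclusion fails, so there is an $\epsilon_0>0$ and a subsequence (not relabeled) such that the Hausdorff distance between $\Sigma_j$ and $\Sigma_\cat$ is at least $\epsilon_0$ for every $j$. Let $\alpha_j=(F_j,G_j)$ be the curve in $\cR$ corresponding to $\Sigma_j$, parametrized with $|\alpha_j'|=L_j$ constant. First I would extract uniform bounds on the curves $\alpha_j$. Since $\Area(\Sigma_j)=2\pi\int_0^1 F_j|\alpha_j'|\,dt \to \Area(\Sigma_\cat)$, the areas are uniformly bounded; combined with the fact that $\alpha_j(0)=p$ and $\alpha_j(1)=q$ are fixed, I expect to deduce that the lengths $L_j$ are uniformly bounded and that the functions $F_j$ are uniformly bounded above and away from zero. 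The lower bound on $F_j$ is the delicate point: if $F_j$ came close to the axis, the curve could still have small area, so a naive area bound does not immediately prevent the surfaces from collapsing toward $\Gamma$. To handle this I would use the uniqueness hypothesis on $\Sigma_\cat$ together with the fact that the union of two discs $D_1\cup D_2$ is \emph{not} area minimizing (otherwise $\Sigma_\cat$ would not be the unique minimizer), so $\Area(\Sigma_\cat)<\Area(D_1)+\Area(D_2)$; any subsequence along which $F_j$ approached zero somewhere would, in the limit, degenerate into a curve whose associated generalized surface has area at least that of $D_1\cup D_2$ (the curve pinches to the axis and splits), contradicting $\Area(\Sigma_j)\to\Area(\Sigma_\cat)$.

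Granted uniform bounds $a\le F_j\le b$ and $L_j\le \Lambda$, I would apply the Arzela-Ascoli theorem (exactly as in the proof of Theorem \ref{exist}) to pass to a further subsequence along which $L_j\to L$ and $\alpha_j\to\alpha_\infty=(F_\infty,G_\infty)$ uniformly on $[0,1]$, with $|\alpha_\infty'|\le L$ a.e. As in the proof of Theorem \ref{exist}, I would replace $\alpha_\infty$ if necessary by a curve $\beta=(F_\infty,H)$ in $\cR$ with $|\beta'|=L$, $H(0)=G_\infty(0)$, $H(1)=G_\infty(1)$, and $|H'|\ge|G_\infty'|$; then $\beta$ corresponds to a genuine surface $\Sigma_\infty\in\cS$. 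By lower semicontinuity of length and the area formula, $\Area(\Sigma_\infty)=2\pi\int_0^1 F_\infty L\,dt \ge 2\pi\int_0^1 F_\infty |\alpha_\infty'|\,dt \ge \liminf 2\pi\int_0^1 F_j|\alpha_j'|\,dt = \Area(\Sigma_\cat)$ — wait, I need the reverse inequality, so I must be more careful: uniform convergence of $\alpha_j$ and convergence $L_j\to L$ give $\int_0^1 F_j L_j\,dt \to \int_0^1 F_\infty L\,dt$, hence $\Area(\Sigma_\infty)=\Area(\Sigma_\cat)$ provided $H$ can be chosen with $|H'|=L=$ the limiting speed, which is possible precisely because $|F_\infty'|\le L$ a.e. (itself a consequence of $|F_j'|\le L_j$). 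So $\Sigma_\infty$ is an area minimizing surface in $\cS$ with the prescribed boundary and the same area as $\Sigma_\cat$.

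Now the uniqueness hypothesis forces $\Sigma_\infty=\Sigma_\cat$: the catenoid is the unique area minimizer with boundary $C_1\cup C_2$, and every element of $\cS$ with area equal to $\Area(\Sigma_\cat)$ is such a minimizer. Since $\alpha_j\to\alpha_\infty$ uniformly and $\alpha_\infty$ parametrizes the meridian of $\Sigma_\cat$ (the curves $\alpha_\infty$ and $\beta$ have the same image $F_\infty$ component, and the meridian of a catenoid is a graph $x\mapsto$ catenary, so $G_\infty$ is determined by $F_\infty$ up to the fixed endpoint values, giving $\alpha_\infty=\beta$ as sets), uniform convergence of the meridian curves yields Hausdorff convergence $\Sigma_j\to\Sigma_\cat$ of the surfaces of revolution. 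This contradicts the assumption that the Hausdorff distance stays at least $\epsilon_0$, and since every subsequence has a further subsequence converging to $\Sigma_\cat$, the whole sequence converges. The main obstacle I anticipate is the uniform lower bound $F_j\ge a$: ruling out pinching to the axis is where the uniqueness of the minimizer (equivalently, that $D_1\cup D_2$ is not a minimizer) must enter, and making the degeneration argument precise — identifying the area of the limit of a pinching sequence of curves with $\Area(D_1)+\Area(D_2)$ or more — requires a careful lower-semicontinuity argument for the area functional on curves that may touch the boundary of $\R^2_+$.
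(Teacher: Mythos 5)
Your overall strategy mirrors the paper's: uniform bounds, Arzela--Ascoli, area of the limiting surface at most $\Area(\Sigma_\cat)$, uniqueness of the minimizer to identify the limit. However, you misidentify where the difficulty lies. You claim that ``a naive area bound does not immediately prevent the surfaces from collapsing toward $\Gamma$'' and propose a lower-semicontinuity/pinching argument that you yourself flag as a gap. In fact a pointwise area bound, applied surface by surface, does the job. For fixed $j$, write $\alpha_j=(F_j,G_j)$; if $F_j(t_0)<a$ for some $t_0\in(0,1)$, then since $|\alpha_j'|\ge|F_j'|$ a.e.,
\[
	\Area(\Sigma_j)=2\pi\int_0^1 F_j|\alpha_j'|\,dt \ \ge\ 2\pi\int_0^1 F_j|F_j'|\,dt \ \ge\ \pi\bigl(R_1^2-F_j(t_0)^2\bigr)+\pi\bigl(R_2^2-F_j(t_0)^2\bigr),
\]
the last step being the total-variation lower bound for $t\mapsto\pi F_j(t)^2$ (which passes from $\pi R_1^2$ to $\pi F_j(t_0)^2$ to $\pi R_2^2$). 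The right side exceeds $\pi(R_1^2-a^2)+\pi(R_2^2-a^2)$; choosing $a$ small so this exceeds $\Area(\Sigma_\cat)$, the hypothesis $\Area(\Sigma_j)\to\Area(\Sigma_\cat)$ forces $F_j\ge a$ for all large $j$ with no limiting argument. This is exactly the paper's comparison with disjoint annuli $A_1,A_2$ of inner radius $a$, and the pinching/degeneration issue you raise never arises. The same device with outer annuli of radius $b$ gives $F_j\le b$.

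Your final step is also circular. After constructing $\beta=(F_\infty,H)\in\cR$ with $|H'|\ge|G_\infty'|$, you argue $\alpha_\infty=\beta$ because ``$G_\infty$ is determined by $F_\infty$'' since the catenary is a graph --- but that would only follow once you already know $\alpha_\infty$ traces the catenoid's meridian, which is precisely what you are trying to prove. The reparametrization is unnecessary. Work directly with the limit curve: since $|\alpha_\infty'|\le L$ a.e., the surface of revolution generated by $\alpha_\infty$ has area
\[
	2\pi\int_0^1 F_\infty|\alpha_\infty'|\,dt \ \le\ 2\pi\int_0^1 F_\infty\,L\,dt \ =\ \lim_{j\to\infty}2\pi\int_0^1 F_j\,L_j\,dt \ =\ \Area(\Sigma_\cat),
\]
so by minimality and uniqueness of $\Sigma_\cat$ that surface \emph{is} $\Sigma_\cat$, and uniform convergence of $\alpha_j$ to $\alpha_\infty$ then gives Hausdorff convergence of $\Sigma_j$ to $\Sigma_\cat$ directly. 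That is the paper's argument.
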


\begin{proof}
Fix constants $b>a>0$.
Assume $a$ is less than $R_1$ and $R_2$.
Also assume $b$ is greater than $R_1$ and $R_2$.
Let $A_1$ and $A_2$ be disjoint annuli in $\R^2$ with inner radii $a$ and outer radii $R_1$ and $R_2$, respectively.
Let $B_1$ and $B_2$ be disjoint annuli in $\R^2$ with outer radii $b$ and inner radii $R_1$ and $R_2$, respectively.
If $a$ is small and $b$ is large, then we have $\Area(\Sigma_\cat) < \Area(A_1 \cup A_2)$ and $\Area(\Sigma_\cat) < \Area(B_1 \cup B_2)$.
Then we may assume that for every $j$,
\[
\label{areaanncompa}
	\Area(\Sigma_j) < \Area(A_1 \cup A_2)
\]
and
\[
\label{areaanncompb}
	\Area(\Sigma_j) < \Area(B_1 \cup B_2)
\]

To prove convergence of the surfaces $\Sigma_j$, we show that any subsequence admits a subsequence which converges to $\Sigma_\cat$ in the Hausdorff metric.
Let $\Sigma_{j_k}$ be an arbitrary subsequence. 
For each $k$, let $\alpha_k$ be the curve in $\cR$ corresponding to $\Sigma_{j_k}$.
Write $\alpha_k=(F_k,G_k)$, and let $L_k$ be the length of $\alpha_k$.
Note that \eqref{areaanncompa} and \eqref{areaanncompb} imply that $a \le F_k(t) \le b$ for every $k$ and every $t$ in $[0,1]$.
It then follows that the lengths $L_k$ are uniformly bounded.
By passing to a subsequence, we may assume that the lengths $L_k$ converge to some positive constant $L$.
By the Arzela-Ascoli theorem, there is a subsequence $\alpha_{k_n}$ which converges uniformly to some Lipschitz curve $\beta:[0,1] \to \R^2_+$.
Moreover $|\beta'(t)| \le L$ for almost every $t$ in $[0,1]$.
Write $\beta=(F_\beta, G_\beta)$.
Now
\[
\label{betaarea}
	\int_0^1 F_\beta | \beta'| \le \int_0^1 F_\beta L = \lim_{n \to \infty} \int_0^1 F_{k_n} L_{k_n}
\]
Let $\Sigma_\beta$ be the surface in $\cS$ such that $\beta$ parametrizes a meridian of $\Sigma_\beta$.
Then by \eqref{areasjcat} and \eqref{betaarea},
\[
	\Area(\Sigma_\beta) \le \Area(\Sigma_\cat)
\]
Since $\Sigma_\cat$ is the unique area minimizer, this implies that $\Sigma_\beta=\Sigma_\cat$, i.e. $\beta$ parametrizes a meridian of $\Sigma_\cat$.
Now the uniform convergence of the curves $\alpha_{k_n}$ to $\beta$ implies that the surfaces in $\cS$ corresponding to $\alpha_{k_n}$ converge to $\Sigma_\cat$ in the Hausdorff metric.
That is, a subsequence of $\Sigma_{j_k}$ converges to $\Sigma_\cat$ in the Hausdorff metric.
Therefore the full sequence of surfaces $\Sigma_j$ converge to $\Sigma_\cat$ in the Hausdorff metric.
\end{proof}

Now we conclude the article by proving Theorem \ref{catenoid}.
Since $\Sigma_\cat$ is in $\cS$, Weyl's law \eqref{weyl} yields
\[
	\liminf_{j \to \infty} \frac{\Lambda_j}{j} \ge \lim_{j \to \infty} \frac{\lambda_j(\Sigma_\cat)}{j} = \frac{4\pi}{\Area(\Sigma_\cat)}
\]
Therefore Theorem \ref{catenoid} is a consequence of the following lemma.

\begin{Lemma}
\label{strongcat}
Assume there is a catenoid $\Sigma_\cat$ which is the unique area minimizing surface with boundary given by $C_1$ and $C_2$.
Let $\Sigma_j$ be a sequence of surfaces in $\cS$ such that
\[
\label{jweylconc}
	\liminf_{j \to \infty} \frac{\lambda_j(\Sigma_j)}{j} \ge \frac{4\pi}{\Area(\Sigma_\cat)}
\]
Then
\[
	\lim_{j \to \infty} \Area(\Sigma_j) = \Area(\Sigma_\cat)
\]
Moreover $\Sigma_j$ converges to $\Sigma_\cat$ in the Hausdorff metric.
\end{Lemma}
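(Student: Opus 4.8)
The plan is to establish \eqref{sketch2}, and then combine it with the hypothesis \eqref{jweylconc} and the area minimality of $\Sigma_\cat$, as outlined in the introduction; the Hausdorff convergence will follow from Lemma \ref{hausdorff}. First I would record uniform bounds. Let $\alpha_j=(F_j,G_j)$ be the curve in $\cR$ corresponding to $\Sigma_j$, with length $L_j$. By Lemma \ref{uniannulus} there are constants $b>a>0$ with $a\le F_j(t)\le b$ for large $j$ and all $t$, and by Lemma \ref{unilength} there is a constant $\bar L$ with $L_j\le\bar L$ for all $j$. Since $C_1\neq C_2$, also $L_j\ge\ell_0$, where $\ell_0>0$ is the distance between the two circles, so $2\pi a\ell_0\le\Area(\Sigma_j)\le 2\pi b\bar L$; in particular the areas stay in a fixed compact subinterval of $(0,\infty)$, and $\lambda_j(\Sigma_j)\to\infty$ by \eqref{jweylconc}.

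The main step is to bound $\lambda_j(\Sigma_j)$ from above by $\lambda_j$ of a union of rectangles whose area is close to $\Area(\Sigma_j)$ and whose perimeter is bounded independently of $j$. Fix a small $\eta>0$, reparametrize the meridian by arclength $s\in[0,L_j]$, and partition $[0,L_j]$ into consecutive bands $\Omega_1,\dots,\Omega_N$ of equal width $h_i\le\eta$, so $N\le\bar L/\eta+1$. Since the arclength reparametrization of $F_j$ is $1$-Lipschitz, $F_j$ varies by at most $\eta$ on each band; writing $\rho_i$ for its minimum on $\Omega_i$, we have $\rho_i\le F_j\le\rho_i+\eta$ there. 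Testing the variational formula \eqref{lamkndef} for $\Omega_i$ with the functions $\sin(n\pi s/h_i)$ and using these bounds gives
\[
\label{strongcat1}
	\lambda_{k,n}(\Omega_i)\le\Big(1+\frac\eta a\Big)\frac{n^2\pi^2}{h_i^2}+\frac{k^2}{\rho_i^2},
\]
and the right side is exactly $\lambda_{k,n}$ of the cylinder $Z_i$ of radius $\rho_i$ and height $\tilde h_i:=h_i/\sqrt{1+\eta/a}$. Comparing the explicit spectrum of $Z_i$ with that of the disjoint union $Q_i$ of two rectangles of dimensions $\tilde h_i\times\pi\rho_i$, one sees that the $k\neq 0$ modes of $Z_i$ reproduce the spectrum of $Q_i$, while $Z_i$ carries in addition the one-dimensional $k=0$ modes; hence the Dirichlet counting function of $Z_i$ dominates that of $Q_i$, so $\lambda_j(Z_i)\le\lambda_j(Q_i)$ for every $j$. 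Inserting the partition points as interior Dirichlet conditions only raises eigenvalues, and the eigenvalues of a disjoint union of surfaces of revolution are obtained by merging the eigenvalue sequences of the components mode by mode; assembling \eqref{strongcat1} over $i$ therefore gives
\[
\label{strongcat2}
	\lambda_j(\Sigma_j)\le\lambda_j\Big(\bigsqcup_i\Omega_i\Big)\le\lambda_j\Big(\bigsqcup_i Z_i\Big)\le\lambda_j(Q),\qquad Q:=\bigsqcup_i Q_i.
\]
Using $\rho_i\le F_j\le\rho_i+\eta$, a direct computation gives $\Area(Q)\ge\Area(\Sigma_j)/\sqrt{1+\eta/a}-2\pi\eta\bar L$, and since the number of bands is at most $\bar L/\eta+1$ and the rectangles have dimensions bounded by $\bar L$ and $\pi b$, we have $\operatorname{Perimeter}(Q)\le P_\eta$ for a constant $P_\eta$ depending only on $\eta$, $b$, $\bar L$.

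Now I would apply Lemma \ref{rect} to $Q$. For large $j$, $\lambda_j(Q)\ge\lambda_j(\Sigma_j)>1$; writing $\mu_j:=\lambda_j(Q)-1$, Lemma \ref{rect} together with the bounds on $\Area(Q)$ and $\operatorname{Perimeter}(Q)$ yields
\[
\label{strongcat3}
	4\pi j\ge\mu_j\Big(\frac{\Area(\Sigma_j)}{\sqrt{1+\eta/a}}-2\pi\eta\bar L\Big)-2P_\eta\sqrt{\mu_j}.
\]
Choosing $\eta$ small enough that the coefficient of $\mu_j$ in \eqref{strongcat3} is bounded below by a positive constant (possible since $\Area(\Sigma_j)\ge 2\pi a\ell_0$), the inequality \eqref{strongcat3} becomes a quadratic inequality in $\sqrt{\mu_j}$ and forces $\mu_j=O(j)$, so $\sqrt{\mu_j}/j\to 0$. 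Rearranging \eqref{strongcat3} to bound $\mu_j$, using $\lambda_j(\Sigma_j)-1\le\mu_j$, and dividing by $j$ gives
\[
	\frac{\lambda_j(\Sigma_j)}{j}\le\frac1j+\frac{4\pi+2P_\eta\sqrt{\mu_j}/j}{\Area(\Sigma_j)/\sqrt{1+\eta/a}-2\pi\eta\bar L},
\]
and taking $\liminf$ as $j\to\infty$, then letting $\eta\to 0$, yields \eqref{sketch2}. Combining \eqref{sketch2} with \eqref{jweylconc} gives $\limsup_j\Area(\Sigma_j)\le\Area(\Sigma_\cat)$; since each $\Sigma_j$ has boundary $C_1\cup C_2$, area minimality of $\Sigma_\cat$ gives $\Area(\Sigma_j)\ge\Area(\Sigma_\cat)$, and hence $\lim_j\Area(\Sigma_j)=\Area(\Sigma_\cat)$. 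The Hausdorff convergence $\Sigma_j\to\Sigma_\cat$ then follows from Lemma \ref{hausdorff}.

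The hard part is the construction behind \eqref{strongcat2}: the meridian must be sliced finely, each slice compared to a cylinder losing only the factor $1+\eta/a$ in the meridian direction, the cylinders traded for genuine rectangles so that Lemma \ref{rect} applies, and the areas, perimeters and number of pieces tracked closely enough that the sharp Weyl constant $4\pi/\Area(\Sigma_j)$ emerges after letting $\eta\to 0$. Everything else is an application of the earlier lemmas or an elementary manipulation of $\liminf$ and $\limsup$.
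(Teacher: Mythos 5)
Your proof is correct and follows essentially the same strategy as the paper: slice the meridian into bands on which $F$ varies by $O(\eta)$, compare each band's Rayleigh quotient to a cylinder at the cost of a $(1+\eta)$ factor, trade cylinders for rectangles, invoke Lemma~\ref{rect} to produce \eqref{sketch2}, and then combine with \eqref{jweylconc}, area minimality, and Lemma~\ref{hausdorff}. The only differences are cosmetic: you reparametrize by arclength rather than subdividing $[0,1]$, you absorb the $(1+\eta)$ loss into the height $\tilde h_i$ rather than into the eigenvalue, and you compare each cylinder to two rectangles of width $\pi\rho_i$ (a particularly clean version of the cylinder-to-rectangle step, since the $k\neq0$ modes match exactly) where the paper uses one rectangle of width $2\pi r_{j,m}$.
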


\begin{proof}
For each $j$, let $\alpha_j$ be the curve in $\cR$ corresponding to $\Sigma_j$, and let $L_j$ be the length of $\alpha_j$.
Let $0<\epsilon<1$.
Let $N>0$ be an integer, and partition $[0,1]$ into $N$ subintervals each of length $1/N$.
That is, for $m=1,2,\ldots,N$, define
\[
	I_m = [(m-1)/N, m/N]
\]
Let $\alpha_{j,m}$ be the restrictions of $\alpha_j$ to $I_m$ for each $m=1,2,\ldots,N$.
For each $j$ and $m$, write $\alpha_{j,m}=(F_{j,m}, G_{j,m})$.
Define $r_{j,m}$ to be the minimum of $F_{j,m}$.
Note that the maximum of $F_{j,m}$ is at most $r_{j,m}+L_j/N$.
Moreover by Lemma \ref{uniannulus} and Lemma~\ref{unilength}, the quantities $r_{j,m}$ and $L_j$ are uniformly bounded above and below by positive constants, independent of $j$ and $m$.
These bounds are also independent of $N$.
In particular, we may assume that $N$ is large, so that for every $j$ and $m$,
\[
\label{rjm}
	\frac{r_{j,m} + L_j/N}{r_{j,m}} \le 1+ \epsilon
\]

Therefore for every $j$, $k$ and $m$, and for every $w$ in $\Lip_0(I_m)$,
\[
\label{bjcj}
\begin{split}
	\frac{\int_{I_m} \frac{|w'|^2 F_{j,m}}{|\alpha_j'|} + \frac{k^2 |w|^2 |\alpha_j'|}{F_{j,m}} \,dt}{\int_{I_m} |w|^2 F_{j,m} |\alpha_j'| \,dt}
		&\le \frac{\int_{I_m} \frac{|w'|^2 (r_{j,m}+L_j/N)}{L_j} + \frac{k^2 |w|^2 L_j}{r_{j,m}} \,dt}{\int_{I_m} |w|^2 r_{j,m} L_j \,dt} \\
		&\le (1+\epsilon) \frac{\int_{I_m} \frac{|w'|^2 r_{j,m}}{L_j} + \frac{k^2 |w|^2 L_j}{r_{j,m}} \,dt}{\int_{I_m} |w|^2 r_{j,m} L_j \,dt} \\
\end{split}
\]
Let $C_{j,m}$ be a cylinder of radius $r_{j,m}$ and height $L_j/N$.
Then by \eqref{bjcj}, for all $j$, $k$, $m$, and $n$,
\[
\label{lambetajm}
	\lambda_{k,n}(\alpha_{j,m}) \le (1+\epsilon) \lambda_{k,n}(C_{j,m})
\]
For each $j$ and $m$, let $Q_{j,m}$ be a rectangle of width $2 \pi r_{j,m}$ and height $L_j/N$.
Assume the rectangles $Q_{j,m}$ are disjoint, and define
\[
	Q_j = \bigcup_{m=1}^N Q_{j,m}
\]
Then $\lambda_j(C_{j,m}) \le \lambda_j(Q_{j,m})$ for every $j$ and $m$, so \eqref{lambetajm} yields
\[
	\lambda_j(\Sigma_j) \le (1+\epsilon) \lambda_j(Q_j)
\]
By \eqref{rjm}, for every $j$,
\[
	\Area(\Sigma_j) \le (1+\epsilon) \Area(Q_j)
\]

Because of the uniform bounds on $r_{j,m}$ and $L_j$, there is a constant $C_N$ such that $\operatorname{Perimeter}(Q_j) \le C_N$ for every $j$.
Therefore if $j$ is large, then by Lemma \ref{rect},
\[
\label{bdperim}
	\frac{4 \pi j}{\lambda_j(\Sigma_j)-2} \ge \frac{\Area(\Sigma_j)}{(1+\epsilon)^2} - 2 C_N \Big(\lambda_j(\Sigma_j)-2 \Big)^{-1/2}
\]
Now by \eqref{jweylconc} and \eqref{bdperim},
\[
\begin{split}
	\limsup_{j \to \infty} \frac{\Area(\Sigma_j)}{(1+ \epsilon)^2}
		&= \limsup_{j \to \infty} \frac{\Area(\Sigma_j)}{(1+\epsilon)^2} - 2C_N \Big(\lambda_j(\Sigma_j)-2 \Big)^{-1/2} \\
		&\le \limsup_{j \to \infty} \frac{4\pi j}{\lambda_j(\Sigma_j)} \\
		&\le \Area(\Sigma_\cat) \\
\end{split}
\]
Since $0<\epsilon<1$ is arbitrary and $\Sigma_\cat$ is area minimizing, this yields
\[
	\lim_{j \to \infty} \Area(\Sigma_j) = \Area(\Sigma_\cat)
\]
Therefore by Lemma \ref{hausdorff}, the surfaces $\Sigma_j$ converge to $\Sigma_\cat$ in the Hausdorff metric.
\end{proof}

\begin{bibdiv}
\begin{biblist}

\bib{BBH}{article}{
	title={Minimization of $\lambda_2(\Omega)$ with a perimeter constraint}
	author={D. Bucur}
	author={G. Buttazzo}
	author={A. Henrot}
	journal={Indiana Univ. Math. J.}
	volume={58}
	date={2009}
	pages={2709-2728}
}

\bib{BI}{article}{
	title={On the minimization of Dirichlet eigenvalues of the Laplace operator}
	author={M. van den Berg}
	author={M. Iversen}
	journal={J. Geom. Anal.}
	volume={23}
	date={2013}
	pages={660-676}
}

\bib{BF}{article}{
	title={Asymptotic behaviour of optimal spectral planar domains with fixed perimeter}
	author={D. Bucur}
	author={P. Freitas}
	journal={J. Math. Phys.}
	volume={54}
	date={2013}
	pages={053504}
}

\bib{AF1}{article}{
	author={P. R. S. Antunes}
	author={P. Freitas}
	title={Optimisation of eigenvalues of the Dirichlet Laplacian with a surface area restriction}
	journal={Appl. Math. Optim.}
	volume={73}
	date={2016}
	pages={313-328}
}

\bib{BO}{article}{
	author={B. Bogosel}
	author={E. Oudet}
	title={Qualitative and numerical analysis of a spectral problem with perimeter constraint}
	journal={SIAM J. Control Optim.}
	volume={54}
	date={2016}
	pages={317-340}
}

\bib{Be}{article}{
	title={On the minimization of Dirichlet eigenvalues}
	author={M. van den Berg}
	journal={Bull. Lond. Math. Soc.}
	volume={47}
	date={2015}
	pages={143-155}
}

\bib{annulus}{article}{
	title={An annulus and a half-helicoid maximize Laplace eigenvalues}
	author={S. Ariturk}
	journal={J. Spectr. Theory}
	date={to appear}
}

\bib{first}{article}{
	title={Maximizing the first Laplace eigenvalue among surfaces of revolution}
	author={S. Ariturk}
	journal={Preprint, arXiv:1410.2221}
}

\bib{BH}{article}{
	title={Minimization of the third eigenvalue of the Dirichlet Laplacian}
	author={D. Bucur}
	author={A. Henrot}
	journal={Proc. R. Soc. Lond. A}
	volume={456}
	date={2000}
	pages={985-996}
}

\bib{Bu}{article}{
	title={Minimization of the $k$-th eigenvalue of the Dirichlet Laplacian}
	author={D. Bucur}
	journal={Arch. Ration. Mech. Anal}
	volume={206}
	date={2012}
	pages={1073-1083}
}

\bib{MP}{article}{
	title={Existence of minimizers for spectral problems}
	author={D. Mazzoleni}
	author={A. Pratelli}
	journal={J. Math. Pures Appl.}
	volume={100}
	date={2013}
	pages={433-453}
}

\bib{AF2}{article}{
	title={Optimal spectral rectangles and lattice ellipses}
	author={P. R. S. Antunes}
	author={P. Freitas}
	journal={Proc. R. Soc. A}
	volume={469}
	date={2013}
	pages={20120492}
}

\bib{BBG}{article}{
	title={Maximising Neumann eigenvalues on rectangles}
	author={M. van den Berg}
	author={D. Bucur}
	author={K. Gittins}
	journal={Bull. Lond. Math. Soc.}
	date={August 8, 2016}
}

\bib{CE}{article}{
	author={B. Colbois}
	author={A. El Soufi}
	title={Extremal eigenvalues of the Laplacian on Euclidean domains and closed surfaces}
	journal={Math. Z.}
	volume={278}
	date={2014}
	pages={529-546}
}

\bib{AbF}{article}{
	title={On the invariant spectrum of $\S^1$-invariant metrics on $\S^2$}
	author={M. Abreu}
	author={P. Freitas}
	journal={Proc. Lond. Math. Soc.}
	volume={84}
	date={2002}
	pages={213-230}
}

\bib{CDE}{article}{
	title={Extremal $G$-invariant eigenvalues of the Laplacian of $G$-invariant metrics}
	author={B. Colbois}
	author={E. B. Dryden}
	author={A. El Soufi}
	journal={Math. Z.}
	volume={258}
	date={2008}
	pages={29-41}
}

\bib{H}{article}{
	title={Quatre propri\'et\'es isop\'erim\'etriques de membranes sph\'eriques homog\`enes}
	author={J. Hersch}
	journal={C. R. Acad. Sci. Paris S\'er. A-B}
	volume={270}
	date={1970}
	pages={A1645-A1648}
}

\bib{KZ}{article}{
	title={Eigenvalues of regular Sturm-Liouville problems}
	author={Q. Kong}
	author={A. Zettl}
	journal={J. Differential Equations}
	volume={131}
	date={1996}
	pages={1-19}
}

\bib{EG}{book}{
	title={Measure theory and fine properties of functions}
	author={L. C. Evans}
	author={R. F. Gariepy}
	date={1992}
	publisher={CRC Press}
}

\bib{RT}{article}{
	title={Potential and scattering theory on wildly perturbed domains}
	author={J. Rauch}
	author={M. Taylor}
	journal={J. Funct. Anal.}
	volume={18}
	date={1975}
	pages={27-59}
}

\bib{L}{book}{
	title={Spectral Theory of Partial Differential Equations - Lecture Notes}
	author={R. S. Laugesen}
	publisher={arXiv:1203.2344}
}

\end{biblist}
\end{bibdiv}

\end{document}